\theoremstyle{definition} 
\newtheorem{definition}{Definition}[section]
\newtheorem{move}{Move}
\theoremstyle{plain} 
\newtheorem{proposition}[definition]{Proposition}
\newtheorem{lemma}[definition]{Lemma}
\newtheorem{theorem}[definition]{Theorem}
\theoremstyle{remark} 
\newtheorem{remark}[definition]{Remark}
\newtheorem{example}[definition]{Example}
\newtheorem*{Reader's guide}{Reader 's guide}
\newcommand{\HH}{\mathcal{H}}
\title{Labeled Rauzy classes and framed translation surfaces}
\author{Corentin Boissy}
\address{
Université Aix-Marseille 3
LATP, case cour A,
Faculté de Saint Jérôme
Avenue Escadrille Normandie-Niemen,
13397 Marseille cedex 20 }
\email{corentin.boissy@latp.univ-mrs.fr}
\subjclass[2000]{Primary: 37E05. Secondary: 37D40}
\keywords{Interval exchange maps, Rauzy induction, Abelian differentials, Moduli spaces, Teichmüller flow}
\date{\today}
\begin{document}
\begin{abstract}
In this paper, we compare two definitions of Rauzy classes. The first one was introduced by Rauzy and was in particular used by Veech to prove the ergodicity of the Teichmüller flow. The second one is more recent and uses a ``labeling'' of the underlying intervals, and was used in the proof of some recent major results about the Teichmüller flow.

The Rauzy diagrams obtained from the second definition are coverings of the initial ones. In this paper, we give a formula that gives the degree of this covering. 

This formula is related to moduli spaces of \emph{framed} translation surfaces, which corresponds to surfaces where we label horizontal separatrices on the surface. We compute the number of connected component of these natural coverings of the moduli spaces of translation surfaces.

Delecroix~\cite{Delecroix2010} proved a recursive formula for the cardinality of the (reduced) Rauzy classes. Therefore, we also obtain formula for labeled Rauzy classes.
\end{abstract}

\maketitle

\setcounter{tocdepth}{1}
\tableofcontents

%%%%%%%%%%%%%%%%%%%%%%%%%%%%%%%%%%%%%%%%%%
%%%%%%%%%%%%%%%%%%%%%%%%%%%%%%%%%%%%%%%%%%
\section{Introduction}

Rauzy induction was introduced in \cite{Rauzy} as a tool to study interval exchange maps. It is a renormalization process that associates to an interval exchange map, another one obtained as a first return map on a well chosen subinterval.
After the major works of Veech~\cite{Veech82} and Masur~\cite{Masur82}, the Rauzy induction became a powerful tool to study the Teichmüller geodesic flow. 

A slightly different tool was used by Kerckhoff \cite{Kerckhoff}, Bufetov \cite{Buf}, and Marmi-Moussa-Yoccoz~\cite{Marmi:Moussa:Yoccoz}. 

It is obtained after labeling the intervals, and keeping track of them during the renormalization process. This small change was a significative improvement, and was used in the recent past to prove other important results about the Teichmüller geodesic flow, for instance the simplicity of the Liapunov exponents (Avila-Viana \cite{Avila:Viana}), or the exponential decay of correlations (Avila-Gouezel-Yoccoz \cite{AGY}). 

An interval exchange map is naturally decomposed into a continuous and a combinatorial datum. A \emph{Rauzy} class is a minimal set of such combinatorial data invariant by the combinatorial Rauzy induction. We will speak of \emph{reduced} or \emph{labeled} Rauzy classes depending whether we use the definition of Rauzy, or the other one. 

Let $k_1,\ldots ,k_r$ be some pairewise distinct nonnegative integers and let $n_1,\ldots ,n_r$ be positive integers such that $\sum_{i=1}^{r}n_i k_i=2g-2$. The stratum of the moduli space of Abelian differentials 
whose corresponding surfaces have precisely $n_i$ singularities of degree $k_i$, for all $i\in \{1,\dots ,r\}$, is usually denoted as $\mathcal{H}(k_1^{n_1},\ldots ,k_r^{n_r})$. In this notation, we implicitely assume that $k_i\neq  k_j$ for all $i\neq j$. A singularity of degree zero is by convention a regular marked point on the surface.

The Veech construction naturally associates to a Rauzy class a connected component $\mathcal{C}$ of a stratum $\mathcal{H}(k_1^{n_1},\ldots ,k_r^{n_r})$ of the moduli space of Abelian differentials, and an integer $k\in \{k_1,\dots ,k_r\}$ that corresponds to degree of the singularity attached on the left in the Veech construction. In the next statement, the pair $(\mathcal{C},k)$ will be refered to the data associated to a Rauzy class by the Veech construction.

\begin{theorem}\label{MT}
Let $R_{lab}$ be a labeled Rauzy class and let $R$ be the corresponding reduced one. Let $(\mathcal{C},k)$ be data associated to $R$ by the Veech construction, where $\mathcal{C}$ is a connected component of a stratum $\mathcal{H}(k_1^{n_1},\ldots ,k_r^{n_r})$  of the moduli space of Abelian differentials, and $k\in \{k_1,\dots ,k_r\}$. 
Let $n$ be number of singularities of degree $k$ for a surface in $\mathcal{H}(k_1^{n_1},\dots ,k_r^{n_r})$.
We have:
$$
\frac{|R_{lab}|}{|R|}=\frac{\Pi_{i=1}^r n_i! (k_i+1)^{n_i}}{n(k+1)} \varepsilon
$$
where $\varepsilon$ satisfies: 
\begin{itemize}
\item $\varepsilon=\frac{1}{g}$  if $\mathcal{C}$ consists only of hyperelliptic surfaces with two conical singularities of degree $g-1$ and possibly some regular marked points.
\item $\varepsilon=\frac{1}{2}$ if $\mathcal{C}$ contains nonhyperelliptic surfaces and if there exists $k'\in \{k_1,\dots ,k_r\}$ which is odd.  
\item $\varepsilon=1$ in all the other cases.
\end{itemize}
\end{theorem}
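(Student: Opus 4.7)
The plan is to translate both sides of the equality into degrees of natural covers of moduli spaces, and to determine the number of connected components by a monodromy argument.

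Via the Veech construction, the reduced Rauzy class $R$ corresponds to a connected component $\widetilde{\mathcal{C}}$ of the moduli space of surfaces $(X,\omega)\in\mathcal{C}$ endowed with a distinguished outgoing horizontal separatrix at a zero of degree $k$, and $R_{lab}$ corresponds to a component $\widehat{\mathcal{C}}$ of the moduli space of such data additionally endowed with a \emph{framing}: a labeling by $\{1,\dots,n_i\}$ of the $n_i$ zeros of degree $k_i$, and a distinguished outgoing separatrix at each zero (the other outgoing separatrices being labeled by the cyclic order coming from the orientation). This dictionary is essentially the standard one: Rauzy induction is the Poincar\'e return map to a shortened horizontal segment, which transports labels canonically. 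Fixing the distinguished pair $(P_0,\sigma_0)$, the number of framings extending it is
\[
(n-1)!\,(k+1)^{n-1}\prod_{i:\,k_i\neq k} n_i!\,(k_i+1)^{n_i} \;=\; \frac{\prod_{i=1}^r n_i!\,(k_i+1)^{n_i}}{n(k+1)} \;=:\; D.
\]
Hence $D$ is the generic degree of the forgetful covering $\widehat{\mathcal{C}}\to\widetilde{\mathcal{C}}$, and $|R_{lab}|/|R| = D/m$ where $m$ is the number of connected components of the preimage in $\widehat{\mathcal{C}}$ of the component corresponding to $R$ (all of equal cardinality, by the deck-group action).

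The remaining task is to compute $m$. This number equals the index in the \emph{gauge group} $\Gamma := \prod_i S_{n_i} \ltimes (\mathbb{Z}/(k_i+1))^{n_i}$ of the image of the monodromy representation $\pi_1(\widetilde{\mathcal{C}})\to\Gamma$ acting on a framed fiber. In the generic case, I would realize all of $\Gamma$ by explicit monodromy: loops moving a singularity around a small circle in the complement of the others realize its cyclic rotation (the $\mathbb{Z}/(k_i+1)$ factors), while loops exchanging two same-degree singularities realize the symmetric factors. This yields $m=1$ and $\varepsilon=1$. In the hyperelliptic case $\mathcal{H}^{hyp}(g-1,g-1,0^l)$, the central hyperelliptic involution $\tau$ satisfies $\tau^*\omega=-\omega$; it interchanges the two zeros of degree $g-1$ and sends outgoing horizontal separatrices at $P_1$ to incoming separatrices at $P_2$. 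This rigid identification forces the monodromy images in the two $\mathbb{Z}/g$-factors to agree on the diagonal, producing an obstruction of index $g$ in $\Gamma$ and hence $m=g$. In the non-hyperelliptic case with some zero of odd degree, a $\mathbb{Z}/2$ parity of the framing (a sign built from the labeled permutation and the cyclic order on separatrices at the odd zero) is preserved by both elementary Rauzy moves and takes two values on a single fiber, giving $m=2$.

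The main obstacle is the monodromy computation, with the hyperelliptic case the most delicate: one must describe explicitly how $\tau$ acts on the framing data and verify that the diagonal constraint on the $\mathbb{Z}/g$-factors is the only obstruction, which I would do by tracking $\tau$-translates of a given framing against the images of known monodromy loops. The non-hyperelliptic odd-degree case requires identifying the correct $\mathbb{Z}/2$ invariant of a labeled permutation and verifying its Rauzy-invariance by a direct case analysis on the two elementary moves, together with exhibiting concrete labeled permutations realizing both parities. Realizing the full gauge group in the generic case is also nontrivial and must be carried out to rule out hidden obstructions beyond those detected in the two special families.
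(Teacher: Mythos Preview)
Your overall architecture---translate the ratio into the degree of a forgetful cover of framed moduli spaces, then compute the number $m$ of components via monodromy---is exactly the paper's strategy, and your degree count $D$ is correct. The genuine gap is in the monodromy step. The claim that ``loops moving a singularity around a small circle in the complement of the others realize its cyclic rotation'' is not correct: on a translation surface the horizontal direction is globally defined, so transporting a zero along a contractible loop in the complement of the others (i.e.\ varying a relative period around a small circle) returns the horizontal separatrices to themselves. There is no local mechanism that rotates the separatrix labelling at a single zero; the monodromy that does this is global and subtle. Concretely, if your small-circle argument worked it would give the full cyclic factor at every zero in every nonhyperelliptic component, forcing $m=1$ and contradicting the $m=2$ you want in the odd-degree case---you never explain why your generic argument should fail there.

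What the paper actually does is construct, in each connected component, a specific surface by surgery (breaking up a zero into two, and ``bubbling handles'' to raise a degree-$0$ or degree-$1$ point to the required degree) so that one can rotate an $\varepsilon$-neighbourhood of a singularity by a full cone angle. Rotating such a neighbourhood by $(k_1+k_2+1)2\pi$ shifts the separatrix at a degree-$k_i$ zero by $-(k_1+k_2+1)\bmod (k_i+1)$; the bubbled-handle move shifts it by $-(p+1)$ where $p\in\{0,1\}$ is the degree of the seed point. It is exactly this computation that explains the dichotomy: with $p=0$ one gets all of $\mathbb{Z}/(k+1)$, while with $p=1$ one gets only the even residues, producing the $\mathbb{Z}/2$ obstruction when odd degrees are present. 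For the invariant side, the paper does not use a combinatorial sign on labeled permutations as you propose; instead it performs a further surgery (inserting a parallelogram along a transversal arc joining a pair of odd zeros) to merge each pair of odd zeros into a single even one, and then reads off the Kontsevich--Zorich spin parity of the resulting surface. Your sketch would need to replace the small-circle heuristic by these explicit surgeries, and either construct the spin invariant or produce and verify the combinatorial $\mathbb{Z}/2$ you allude to.
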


Delecroix~\cite{Delecroix2010} has found a recursive formula for the cardinality of the reduced Rauzy classes. The previous theorem complete his result for the case of labeled Rauzy classes.

Our result is related to moduli spaces of framed translation surfaces. 
Informally, we will call a \emph{frame} on a translation surface $X$ a map $F_X$ from a discrete alphabet $\mathcal{A}$, to a set $\mathcal{S}_X$ of discrete combinatorial data on the surface $X$ such that  the moduli space of framed translation surfaces is a covering of the corresponding moduli space of translation surface.

In our case, we are interested in the case where $\mathcal{S}_X$ is the set of \emph{horizontal outgoing separatrices}.  Several different kind of frames will appear in this context. The most important will be the space $\mathcal{C}(\mathcal{F}_{comp})$. It corresponds to the space were we label exactly one horizontal separatrix for each singularity (see Section~\ref{framed} for a more precise definition), and the alphabet $\mathcal{A}$ is a disjoint union of subalphabets $\mathcal{A}_k$, such that the labels in $\mathcal{A}_k$ correspond to singularities of degree $k$.

Choosing $\alpha$ in some $\mathcal{A}_k$, one gets a natural covering $p_\alpha:\mathcal{C}(\mathcal{F}_{comp})\to \mathcal{C}(\mathcal{F}_{k})$, were $\mathcal{C}(\mathcal{F}_{k})$ is a moduli space of framed translation surfaces that corresponds to translation surfaces with a with a single marked separatrix adjacent to a singularity of degree $k$. 

Theorem~\ref{MT} will be a consequence of the two following results, which give a geometrical interpretation of the formula.

\begin{proposition}\label{prop:intro}
Let $R_{lab}$ be a labeled Rauzy class and let $R$ be the corresponding reduced one. The ratio $|R_{lab}|/|R|$ is equal to the degree of the covering  $p_\alpha$, when restricted to a connected component of $\mathcal{C}(\mathcal{F}_{comp})$.
\end{proposition}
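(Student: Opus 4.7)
The plan is to interpret both cardinalities geometrically: elements of $R_{lab}$ (resp.~$R$) should index the combinatorial cells of a cross-section to the Teichm\"uller flow on a connected component of $\mathcal{C}(\mathcal{F}_{comp})$ (resp.~$\mathcal{C}(\mathcal{F}_k)$), in such a way that the forgetful map $R_{lab}\to R$ realizes $p_\alpha$ on cells. The proposition would then follow by transitivity of Rauzy--Veech induction on a Rauzy class.

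My first step would be to extend Veech's zippered-rectangle construction to produce, from a labeled $\pi\in R_{lab}$ and continuous data, a translation surface together with a framing of $\mathcal{F}_{comp}$-type. The singularities of the constructed surface arise from grouping the interval endpoints into cyclic equivalence classes, and at each singularity of degree $k$ a canonical interval endpoint can be selected (for instance the one marking the first outgoing horizontal separatrix in a reference cyclic order). The IET label of the interval providing this endpoint, which lies in the subalphabet $\mathcal{A}_k$ associated with degree $k$, supplies the framing of that singularity. The analogous construction for reduced IETs retains only the framing of the distinguished singularity of degree $k$ attached on the left, yielding a point of $\mathcal{C}(\mathcal{F}_k)$.

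Second, I would check that the two Rauzy--Veech moves preserve both the underlying surface and the extracted framing: they modify intervals only at the right end, and one verifies by a direct case analysis that the framing of $\mathcal{F}_{comp}$-type read off from the new labeled datum agrees, on the same surface, with that read off from the old one. Combined with the standard Veech connectedness argument (applied now in the framed setting), this identifies $R_{lab}$ with the combinatorial cells of a single connected component of $\mathcal{C}(\mathcal{F}_{comp})$ and $R$ with those of the corresponding component of $\mathcal{C}(\mathcal{F}_k)$, while the forgetful map $R_{lab}\to R$ commutes by construction with $p_\alpha$. Since Rauzy--Veech acts transitively on $R$, the fibers of the forgetful map must have constant cardinality; this common cardinality is simultaneously $|R_{lab}|/|R|$ and the generic fiber cardinality of $p_\alpha$ on the component, i.e.~its degree.

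The main obstacle is the first step: canonically extracting the $\mathcal{F}_{comp}$-framing from the IET combinatorics. The IET alphabet has $d = 2g + n - 1$ elements while the framing alphabet has only $n$, so the assignment selects one distinguished label per singularity, and this selection must be compatible with the degree decomposition $\mathcal{A} = \bigsqcup_k \mathcal{A}_k$ as well as equivariant under both Rauzy moves. Pinning down a combinatorial recipe that is canonical and Rauzy-invariant is the technical core of the proof; once that is established, the remainder of the argument is fairly formal.
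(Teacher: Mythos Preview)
Your overall strategy---interpret both Rauzy classes via Veech's construction as combinatorial cells over framed moduli spaces, verify Rauzy-invariance of the framing, and conclude by transitivity---is exactly the paper's. But you have made the central technical step harder than it needs to be, and in doing so left it unresolved.

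You propose to extract an $\mathcal{F}_{comp}$-framing directly from the labeled permutation by selecting, at each singularity, one canonical separatrix among the $k+1$ available. You then correctly flag this selection as the ``main obstacle'': the recipe must be Rauzy-invariant, and your suggested rule (``first outgoing separatrix in a reference cyclic order'') has no intrinsic basepoint, so it is not actually a recipe. The paper does not attempt any such selection. Instead it observes that the zippered-rectangle construction \emph{already} labels \emph{every} outgoing horizontal separatrix: each rectangle has a unique singularity on its left vertical side, and the outgoing separatrix there inherits the rectangle's letter. (The two leftmost rectangles share a separatrix, so one passes to the quotient alphabet $\mathcal{A}/\{\pi_t^{-1}(1)\sim\pi_b^{-1}(1)\}$.) This yields an element of $\mathcal{C}(\mathcal{F}_{sat})$, not $\mathcal{C}(\mathcal{F}_{comp})$, and its Rauzy-invariance is an elementary check since the moves only affect the right end. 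The passage from $\mathcal{F}_{sat}$ to $\mathcal{F}_{comp}$ is then a separate, purely topological step: the partition of labels by singularity and their cyclic order around each singularity are locally constant on $\mathcal{C}(\mathcal{F}_{sat})$, so connected components of $\mathcal{C}(\mathcal{F}_{sat})$ and $\mathcal{C}(\mathcal{F}_{comp})$ are canonically identified. With that in hand, the commutative square over $\mathcal{C}(\mathcal{F}_k)$ (whose connectedness is quoted from \cite{Boissy:rc}) gives the degree equality, just as you outline.

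In short: do not try to pick one separatrix per singularity from the IET combinatorics. Label them all, land in $\mathcal{C}(\mathcal{F}_{sat})$, and then argue that the redundant information is a connected-component invariant.
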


\begin{theorem}\label{th:cc:Cfr}
Let $\mathcal{C}$ be a connected component of a stratum of the moduli space of Abelian differentials. The space $\mathcal{C}(\mathcal{F}_{comp})$  is not connected in general. More precisely, it has:
\begin{itemize}
\item $g$ connected components if $\mathcal{C}$ is the hyperelliptic connected component of $\mathcal{H}(g-1,g-1)$, with possibly some regular marked points.
\item Two connected components if it does not corresponds to a hyperelliptic connected component and if there exists odd degree conical singularities.  
\item One connected component in all the other cases.
\end{itemize}
 \end{theorem}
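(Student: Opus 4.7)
The strategy is to compute the orbits of $\pi_1(\mathcal{C})$ acting on the fiber of the covering $\mathcal{C}(\mathcal{F}_{comp}) \to \mathcal{C}$. Over $X \in \mathcal{C}$ the fiber has cardinality $\prod_i n_i!\,(k_i+1)^{n_i}$: one chooses a bijection between the $n_i$ singularities of degree $k_i$ and the labels of $\mathcal{A}_{k_i}$, and at each singularity one picks one of the $k_i+1$ outgoing horizontal separatrices. The connected components of $\mathcal{C}(\mathcal{F}_{comp})$ are in bijection with the orbits of this monodromy action, so the task is to describe these orbits.

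I would first reduce to studying only the separatrix choices, by showing that the monodromy can permute arbitrarily the $n_i$ singularities of each fixed degree $k_i$; loops in $\mathcal{C}$ that exchange two same-degree zeros are standard surgeries. Next, I would show that at most singularities one has a loop in $\mathcal{C}$ whose monodromy cyclically rotates the separatrices at that zero. The model loop is obtained by \emph{bubbling}: split the zero into two zeros of smaller degrees $k_1+k_2=k$, transport the split configuration along a closed path in the adjacent stratum, and recombine; a local computation at the zero shows that a well-chosen bubbling gives a cyclic shift by one. Whenever such loops can be applied at every zero independently and no global constraint couples them, the action is transitive and one obtains a single orbit.

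The heart of the proof is identifying the two obstructions. For a nonhyperelliptic component containing an odd-degree singularity, I would introduce a $\mathbb{Z}/2$-valued invariant $\pi$ on frames: fixing a convention numbering the separatrices at each singularity by $\{0,\dots,k_i\}$, let $\pi$ be the parity of the sum of chosen indices at the odd-degree zeros. A case analysis of the bubbling loops shows that each elementary monodromy changes these indices in pairs at odd-degree zeros, so $\pi$ is preserved; conversely, combining bubbling loops inside each connected component shows $\pi$ is the only obstruction, yielding exactly two orbits. For the hyperelliptic component of $\mathcal{H}(g-1,g-1)$ (possibly with extra marked points), the hyperelliptic involution $\tau$ swaps the two zeros of degree $g-1$ and induces a canonical pairing of their separatrices; one defines $\delta \in \mathbb{Z}/g$ as the relative index of the two chosen separatrices under this pairing. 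Every loop in the hyperelliptic locus commutes with $\tau$, so $\delta$ is monodromy-invariant; exhibiting explicit loops realizing all simultaneous shifts $(s,s)$ at the two zeros shows that $\delta$ is the only invariant, giving $g$ orbits.

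The main obstacle I expect is the hyperelliptic case: the fundamental group of the hyperelliptic component is significantly smaller than that of the ambient stratum, and one must carefully verify both that $\delta$ is truly preserved (no hidden $\tau$-exchanging loop is available in a connected hyperelliptic component) and that all $g$ values of $\delta$ are genuinely realized. A clean route is to work directly with the explicit hyperelliptic Veech/Rauzy combinatorics and to track labels through the Rauzy renormalization, which naturally ties the theorem to the labeled-Rauzy framework of the paper. Regular marked points of degree $0$ contribute a unique separatrix each and hence do not affect the separatrix count; they are absorbed into the permutation factor and handled together with the same-degree swaps at the very beginning.
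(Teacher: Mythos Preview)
Your overall architecture---identify the fiber, act by $\pi_1(\mathcal C)$, count orbits---is the same as the paper's, and your treatment of the hyperelliptic invariant $\delta\in\mathbb Z/g$ matches Proposition~\ref{low:bound:hyp}. The upper-bound side is also close in spirit, though the paper uses two distinct surgeries (breaking up a zero, which is your ``split and recombine'', \emph{and} bubbling handles, which raises the genus and is what actually gives a shift-by-one at a single even-degree zero); your description conflates these, and without the handle-bubbling move you would not get full rotation at an isolated even-degree singularity.

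The genuine gap is your lower bound in the odd-degree, nonhyperelliptic case. Your invariant $\pi$ is the parity of the sum of separatrix indices at odd-degree zeros, relative to an arbitrary local numbering. To conclude that $\pi$ separates connected components you must show it is preserved by \emph{every} loop in $\pi_1(\mathcal C)$, but you only check the specific bubbling/splitting loops you construct. Those loops are not known to generate $\pi_1(\mathcal C)$, so ``each elementary monodromy changes these indices in pairs'' does not establish invariance. This is exactly the step where the paper works harder: rather than a locally defined parity, it builds from a framed surface $X$ an auxiliary translation surface with only even-degree zeros (the parallelogram construction of Section~\ref{sec4}, pairing up the odd zeros along transversal arcs and gluing in thin parallelograms), and takes the Kontsevich--Zorich parity of spin structure of that surface. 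Because the output lives in a stratum and spin parity is a topological invariant of connected components there, invariance under all deformations of $X$ is automatic; one then checks the two values are realized. Your $\pi$ may well agree with this spin invariant a posteriori, but as presented you have not proved it is constant on components, so the argument does not close.
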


\subsection*{Acknowledgements}

We thank Vincent Delecroix, Luca Marchese and Erwan Lanneau for remarks and comments on this paper. We also thank~\cite{sage} for computational help.

%%%%%%%%%%%%%%%%%%%%%%%%%%%%%%%%%%%%%%%%%%
%%%%%%%%%%%%%%%%%%%%%%%%%%%%%%%%%%%%%%%%%%
\section{Background}
%==================================================
\subsection{Labeled and reduced interval exchange transformations}
We give here the two definitions of interval exchange transformations that are used in the literature. In order to distinguish them, we will add the terms \emph{reduced} and \emph{labeled}.

The first one is due to Rauzy \cite{Rauzy}
\begin{definition}
Let $I \subset \mathbb R$ be an open interval and let us choose a finite subset $\Sigma= \{x_1,\ldots,x_{d-1}\}$ of $I$. The complement of $\Sigma$ in $I$ is a disjoint union of $d\geq 2$ open subintervals $\{I_j, \ j=1,\dots,d \}$. An \emph{reduced interval exchange transformation} is a map $T$ from $I\backslash \Sigma$ to $I$ that permutes, by translation, the subintervals $I_j$. It is easy to see that $T(I\backslash \Sigma)=I\backslash \Sigma'$, were $\Sigma'\subset I$ is of the same cardinality as $\Sigma$, and that $T$ is precisely determined by:
\begin{itemize}
\item A \emph{combinatorial datum}: a permutation $\pi\in \Sigma_d$ which expresses that the interval number $k$, when counted from the left to the right, is sent to the place $\pi(k)$ by the map $T$.
\item A \emph{continuous datum}: a vector $\lambda\in \mathbb{R}^d$ with positive entries that corresponds to the lengths of the intervals.
\end{itemize}
\end{definition}
We will identify an interval exchange transformation with its parameters $(\pi,\lambda)$.
\medskip

The second definition of interval exchange transformation was first introduced by Kerckhoff~\cite{Kerckhoff} and later formalized by Bufetov~\cite{Buf} and  Marmi, Moussa \& Yoccoz~\cite{Marmi:Moussa:Yoccoz}. As we will see later, it simplifies the description of the Rauzy induction, but we get bigger Rauzy classes.

\begin{definition}
A \emph{labeled interval exchange map} is an reduced interval exchange map with a pair $(\pi_t,\pi_b)$ of one-to-one maps from a finite alphabet $\mathcal{A}$ to $\{1,\ldots,d\}$. The interval number $k$, when counted from the left to the right, is denoted by $I_{\pi_t^{-1}(k)}$. Once the intervals are exchanged, the interval number $k$ is $I_{\pi_b^{-1}(k)}$. 
\end{definition}
In the previous definition, it is easy to see that the permutation corresponding to the underlying reduced interval exchange map is $\pi=\pi_b \circ \pi_t^{-1}$, and the continuous datum is a vector with positive entries $\lambda\in \mathbb{R}^{\mathcal{A}}$.
We will identify a labeled interval exchange map with the pair $(\tilde{\pi},\lambda)$, were $\tilde{\pi}=(\pi_t,\pi_b)$. We will call $\tilde{\pi}$ a \emph{labeled permutation}.

We will usually represent a labeled permutation by a table:
\begin{eqnarray*}
\tilde{\pi}= \left(\begin{array}{ccccc}\pi_t^{-1}(1)&\pi_t^{-1}(2)&\ldots&\pi_t^{-1}(d)
  \\
\pi_b^{-1}(1)&\pi_b^{-1}(2)&\ldots&\pi_b^{-1}(d) \end{array}\right).
\end{eqnarray*}
As we can see from this representation, we have ``t'' for top, ``b'' for bottom in the notation  $\pi_t,\pi_b$.

A \emph{renumbering} of a labeled permutation is the composition of $(\pi_t,\pi_b)$ by a one-to-one map $f$ from $\mathcal{A}$ to $\mathcal{A}'$. It just corresponds to changing the labels without changing the underlying permutation. From the precious definitions, it is clear that a reduced interval exchange transformation (\emph{resp.} a permutation) is an equivalent class of labeled interval exchange maps (\emph{resp.} labeled permutations) up to renumbering. We will sometime identify a permutation with its unique representative $(\pi_t,\pi_b)$ with $\mathcal{A}=\{1,\ldots,d\}$ and $\pi_t=Id$.

%==================================================
\subsection{Rauzy-Veech induction}
Let $T$ be a labeled or reduced interval exchange map. 
The Rauzy--Veech induction $\mathcal R(T)$ of $T$ is defined as the first
return map of $T$ to a certain subinterval $J$ of $I$ (see~\cite{Rauzy,Marmi:Moussa:Yoccoz} and \cite{Rauzy} for details). 

We recall briefly the construction for the labeled case. Following the terminology of~\cite{Marmi:Moussa:Yoccoz} we define the \emph{type} of $T$ by $t$ if $\lambda_{\pi_t^{-1}(d)} > \lambda_{\pi_b^{-1}(d)}$ and $b$ if $\lambda_{\pi_t^{-1}(d)} < \lambda_{\pi_b^{-1}(d)}$. When $T$ is of type $t$ (respectively, $b$) we will say that the label $\pi_t^{-1}(d)$ (respectively, $\pi_b^{-1}(d)$) is the winner and that $\pi_b^{-1}(d)$ (respectively, $\pi_t^{-1}(d)$) is the looser. We define a subinterval $J$ of $I$ by
$$
J=\left\{ \begin{array}{ll}
I \backslash T(I_{\pi_b^{-1}(d)}) & \textrm{if $T$ is of type t};\\
I \backslash I_{\pi_t^{-1}(d)} & \textrm{if $T$ is of type b.} 
\end{array} \right.
$$
The image of $T$ by the  Rauzy-Veech induction $\mathcal R$ is defined as the first
return map of $T$ to the subinterval $J$. This is again a labeled interval exchange transformation, defined on $d$ letters. The combinatorial datum of this new interval exchange transformation is very easy to calculate in terms of the one of $T$. Indeed, let 
$\alpha\in \mathcal{A}$ (\emph{resp.} $\beta\in \mathcal{A}$) be the winner (\emph{resp.} looser). Let $\lambda' \in \mathbb{R}^\mathcal{A}$ such that:
\begin{eqnarray*}
\lambda'_\alpha&=&\lambda_\alpha-\lambda_\beta \\
\lambda'_\nu&=&\lambda_\nu \qquad \textrm{for all } \nu \in  \mathcal{A}\backslash\{\alpha\}
\end{eqnarray*}

Then, $\mathcal{R}(T)=(\mathcal{R}_\varepsilon(\pi),\lambda')$, were $\varepsilon$ is the type of $T$, and $\mathcal{R}_t,\mathcal{R}_b$ are the following combinatorial maps:

\begin{enumerate}
\item {$\mathcal{R}_t$}: let $k=\pi_b(\pi_t^{-1}(d))$ with $k\leq d-1$. 
Then, $\mathcal R_t(\pi_t,\pi_b)=(\pi_t',\pi_b')$ where $\pi_t=\pi_t'$ and
$$
\pi_b'^{-1}(j) = \left\{
\begin{array}{ll}
\pi_b^{-1}(j) & \textrm{if $j\leq k$}\\
\pi_b^{-1}(d) & \textrm{if $j = k+1$}\\
\pi_b^{-1}(j-1) & \textrm{otherwise.}
\end{array} \right.
$$

\item {$\mathcal{R}_b$}: let $k=\pi_t(\pi_b^{-1}(d))$ with $k \leq
d-1$. Then, $\mathcal R_b(\pi_t,\pi_b)=(\pi_t',\pi_b')$ where $\pi_b=\pi_b'$ and
$$
\pi_t'^{-1}(j) = \left\{
\begin{array}{ll}
\pi_t^{-1}(j) & \textrm{if $j\leq k$}\\
\pi_t^{-1}(d) & \textrm{if $j = k+1$}\\
\pi_t^{-1}(j-1) & \textrm{otherwise.}
\end{array} \right.
$$
\end{enumerate}

The maps $\mathcal{R}_t,\mathcal{R}_b$ are called the combinatorial Rauzy moves. It is easy to define similar maps for reduced permutations. We first identify $\pi$ with the corresponding $(Id,\pi_b)$ and perform the Rauzy move. Then, if needed, we renumber the result so that it corresponds to a reduced permutation. 
We will still denote the corresponding maps by $\mathcal{R}_t,\mathcal{R}_b$, 
since it will always be clear, when the distinction is needed, whether or not the objects are labeled or reduced.

We define the Rauzy induction for reduced interval exchange maps by considering labeled interval exchange maps up to renumbering.

\begin{definition}
A \emph{Rauzy class}, usually denoted by $R$ is a minimal  set of labeled or reduced  permutations invariant by the combinatorial  Rauzy moves. 

A \emph{Rauzy diagram} is a graph whose vertices are the elements of a Rauzy class and whose vertices are the combinatorial Rauzy maps.

A Rauzy class or Rauzy diagram will be called labeled or reduced depending on the corresponding permutations.
\end{definition}

\begin{example}\label{ex:card:rc}
\begin{itemize}
\item Let $\tau_n=\left(\begin{smallmatrix}
1&2&\ldots &n\\ n&n-1&\ldots &1
\end{smallmatrix}\right)$. Rauzy proved that the cardinality of $R(\tau_n)$ is $2^{n-1}-1$ for the reduced case, and one can prove ``by hand'' that it is the same for the labeled case.     
\item Let $\pi_n= \left( \begin{smallmatrix} 0 & 2 & 3 & \dots & n-1 & 1 & n \\
n & n-1 & \dots & 3 & 2 & 1 & 0 \end{smallmatrix} \right)$. 
Contrary to the previous case, the labeled and reduced diagrams are not 
isomorphic anymore. The structure of the labeled and reduced Rauzy diagrams is  precisely described in \cite{BL:pA}. It is in particular shown that the cardinality of the reduced diagram is $2^{n-1} -1 + n$ and 
the cardinality of the labeled diagram is $(2^{n-1} -1+ n)(n-1)$.
 
\item Consider $\pi=\left( \begin{smallmatrix}
1&2&3&4&5&6&7&8&9\\ 9&1&4&3&2&5&8&7&6
\end{smallmatrix} \right)$. The reduced Rauzy class is of size\footnote{This can be computed for instance using Zorich's MATHEMATICA  software, or using the SAGE package developed by Delecroix} 1255 and the labeled Rauzy class is of size 30120. The ratio is 24.  
\end{itemize}
\end{example}

%==================================================
\subsection{Translation surfaces and moduli space}
%
%---------------------------------------------------
\subsubsection{Translation surfaces}
A \emph{translation surface} is a (real, compact, connected) genus $g$ surface $X $ with a translation atlas \emph{i.e.} a triple $(X,\mathcal U,\Sigma)$ such that $\Sigma$ is a finite subset of $X$ (whose elements are called {\em singularities}) and $\mathcal U = \{(U_i,z_i)\}$ is an atlas of $X \setminus \Sigma$ whose transition maps are translations. We will require that  for each $s\in \Sigma$, there is a neighborhood of $s$ isometric to a Euclidean cone.  One can show that the holomorphic structure on $X\setminus \Sigma$ extends to $X$ and that the holomorphic 1-form $\omega=dz_i$ extends to a holomorphic $1-$form on $X$ were  $\Sigma$ corresponds to the zeroes of $\omega$ and maybe some marked points. We usually call $\omega$ an \emph{Abelian differential}. 

For $g \geq 1$, we define the moduli space of Abelian differentials $\HH_g$ as the moduli space of pairs $(X,\omega)$ where $X$ is a genus $g$ (compact, connected) Riemann surface and $\omega$ non-zero holomorphic $1-$form defined on $X$. The term moduli space means that we identify the points $(X,\omega)$ and $(X',\omega')$ if there exists an analytic isomorphism $f:X \rightarrow X'$ such that 
$f^* \omega'=\omega$.
The group $\textrm{SL}_2(\mathbb R)$ naturally acts on the moduli space of translation surfaces by post composition on the charts defining the translation structures.

One can also see a translation surface obtained as a polygon (or a finite union of polygons) whose sides come by pairs, and for each pairs, the corresponding segments are parallel and of the same lengths. These parallel sides are glued together by translation and we assume that this identification preserves the natural orientation of the polygons. In this context, two translation surfaces are identified in the moduli space of Abelian differentials if and only if the corresponding polygons can be obtained from each other by cutting and gluing and preserving the identifications. Also, the $SL_2(\mathbb{R})$ action in this representation is just the natural linear action on the polygons. \medskip	

The moduli space of Abelian differentials is stratified by the  combinatorics of the zeroes; we will denote by $\HH(k_1^{n_1},\ldots ,k_r^{n_r})$ the stratum of $\HH_g$ consisting of (classes of) pairs $(X,\omega)$ such that $\omega$ possesses exactly $n_i$ zeroes on $X$ with multiplicities $k_i$ for all $i\in \{1,\ldots ,r\}$, and no other zeroes. It is a well known part of the Teichmüller theory that these spaces are  (Hausdorff) complex analytic, and in fact algebraic, spaces. These strata are non-connected in general but each stratum has at most three 
connected components (see~\cite{KoZo} for a complete classification, or see Section~\ref{sec4}). 

Given a translation surface $X$, we will call \emph{separatrix} an oriented half line (possibly finite) starting from a singularity of $X$. A horizontal separatrix 
$l$ will be \emph{outgoing} if it goes on the right in a translation chart, and \emph{incoming} otherwise.

%====================================================
\subsection{Suspension data}
\label{sec:suspension}

The next construction provides a link between interval exchange transformations and translation surfaces.
A suspension datum for $T=(\pi,\lambda)$ is a collection of vectors $\{\tau_\alpha\}_{\alpha\in \mathcal{A}}$ such that
\begin{itemize}
\item $\forall 1 \leq k \leq d-1,\ \sum_{\pi_t(\alpha)\leq k} \tau_\alpha>0$,
\item $\forall 1 \leq k \leq d-1,\ \sum_{\pi_b(\alpha)\leq k} \tau_\alpha<0$. 
\end{itemize}

We will often use the notation $\zeta=(\lambda,\tau)$. 
To each suspension datum $\tau$, we can associate a translation surface $(X,\omega)=X(\pi,\zeta)$ 
in the following way. 

Consider the broken line $L_t$ on $\mathbb{C}=\mathbb R^2$ defined by concatenation of the vectors $\zeta_{\pi_t^{-1}(j)}$ (in this order) for $j=1,\dots,d$ with starting point at the origin. Similarly, we consider the broken line $L_b$  defined by concatenation of the vectors $\zeta_{\pi_b^{-1}(j)}$ (in this order) for $j=1,\dots,d$ with starting point at the origin. If the lines $L_t$ and $L_b$ have no intersections other than the endpoints, we can construct a translation surface $X$ by identifying each side $\zeta_j$ on $L_t$ with the side $\zeta_j$ on $L_b$ by a translation. The resulting surface is a translation surface endowed with  the form $\omega=dz$. Note that the lines $L_t$ and $L_b$ might have some other intersection points. But in this case, one can still define a translation surface by using the \emph{zippered rectangle construction}, due to  Veech (\cite{Veech82}). See for instance Figure~\ref{ietsusp}.

\begin{figure}[htb]
\begin{center}
\includegraphics[scale=0.7]{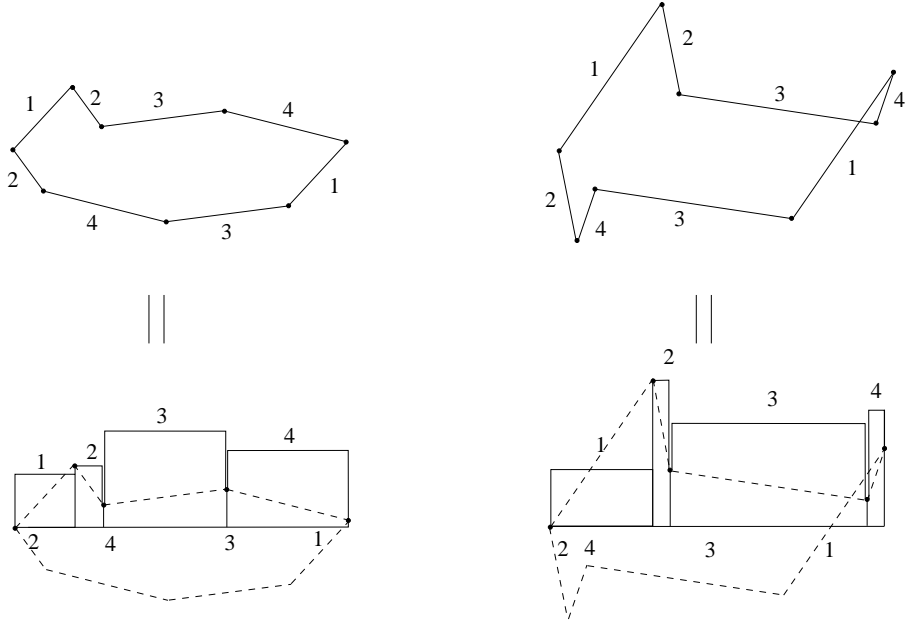}
\caption{The zippered rectangle construction, for two examples of suspension data.}
\label{ietsusp}
\end{center}
\end{figure}

Let $I \subset X$ be the horizontal interval defined by $I = (0,\sum_{\alpha} \lambda_{\alpha}) \times \{0\}$. The reduced interval exchange transformation $T$ is precisely the one defined by the first return map to $I$ of the vertical flow on $X$. 

We can extend the Rauzy induction to suspension data in the following way:  let $\tau$ be a suspension data over $(\pi,\lambda)$,  we define $\mathcal R(\pi,\lambda,\tau)=(\pi',\lambda',\tau')$ by:
\begin{itemize}
\item $\mathcal{R}(\pi,\lambda)=(\pi',\lambda')$
\item $\tau'_\alpha=\tau_\alpha-\tau_\beta$, where $\alpha$ (resp. $\beta$) is the winner (resp. looser) for $T=(\pi,\lambda)$
\end{itemize}

This extension is known as the \emph{Rauzy--Veech induction}, and is used as a discretization of the Teichmüller flow.

\begin{remark}
\label{rk:isometric}
By construction the two translation surfaces $X(\pi,\zeta)$ and $X(\pi',\zeta')$ define the same element in the moduli space.
\end{remark}

\begin{remark}
Note that $\lambda,\tau$ define natural local parameters for the stratum of the moduli space of Abelian differentials.
\end{remark}

\begin{remark}\label{data:separatrix}
The left end of the two lines in the previous construction $L_t,L_b$ is a singularity, and the horizontal half line starting from this point to the right corresponds to a choice of a horizontal separatrix starting from this singularity, and it is easy to see that this combinatorial data is preserved under the Rauzy--Veech induction. Let us denote by $l(\pi,\zeta)$ this separatrix.
\end{remark}

%%%%%%%%%%%%%%%%%%%%%%%%%%%%%%%%%%%%%%%%%%
%%%%%%%%%%%%%%%%%%%%%%%%%%%%%%%%%%%%%%%%%%
\section{Rauzy diagrams and framed translation surfaces}
%==================================================
\subsection{Moduli space of framed translation surface.}\label{framed}

A \emph{frame} on a translation surface $X$ a map $F_X$ from a discrete alphabet $\mathcal{A}$, to a set $\mathcal{DC}_X$ of discrete combinatorial data on the surface $X$. 

Let  $\mathcal{C}$ be a connected component of a stratum of moduli space of translation surfaces, and a collection $\mathcal{F}$ of frames for translation surfaces in $\mathcal{C}$, with a fixed alphabet.  One can define the corresponding moduli space of framed translation surfaces: two elements $(X,F_X)$ and $(X',F'_{X'})$ are identified if there is a translation mapping $X\to X'$ which is consistent with the frames. Then, we will denote by $\mathcal{C}(\mathcal{F})$ the corresponding moduli space.

The sets $\mathcal{DC}_X$ can be many things: incoming or outgoing horizontal separatrices, $H_1(X,\mathbb{Z})$, etc\dots Here we will not study the precise conditions on the collections of frames so that $\mathcal{C}(\mathcal{F})$ is a ``nice'' space. We will just introduce three cases, for which $\mathcal{C}(\mathcal{F})$ are coverings of $\mathcal{C}$.
We first ask that $\mathcal{DC}_X=\mathcal{S}_X$ is the set of \emph{horizontal outgoing separatrices}, then we consider the three families of all frames that satisfy the following conditions respectively:

\begin{enumerate}
\item $\mathcal{F}_k$: the set $\mathcal{A}$ is a singleton and the image of $F_X$  is any separatrix adjacent to a degree $k$ singularity. 

\item
$\mathcal{F}_{sat}$:  $F_X$ is a one-to-one mapping from $\mathcal{A}$ to $\mathcal{S}_X$. 

\item $\mathcal{F}_{comp}$: $\mathcal{A}=\sqcup_k \mathcal{A}_k$, the map $F_X$ is injective, and we require that for each $k$ any singularity of $S$ of degree $k$ has a unique separatrix in $F(\mathcal{A}_k)$.
\end{enumerate}

We will denote respectively by $\mathcal{C}(\mathcal{F}_{k})$, $\mathcal{C}(\mathcal{F}_{sat})$, and $\mathcal{C}(\mathcal{F}_{comp})$, the corresponding moduli spaces, which are finite coverings of $\mathcal{C}$.

\begin{enumerate}
\item The space $\mathcal{C}(\mathcal{F}_{k})$ is the moduli space of pairs $(X,l)$, were $X\in \mathcal{C}$ and $l$ is a separatrix adjacent to a singularity of degree $k$ in $X$.
\item The space $\mathcal{C}(\mathcal{F}_{sat})$ corresponds to translation surfaces were we label each horizontal outgoing separatrix by an element in $\mathcal{A}$. It corresponds to a ``saturated case''.
\item The space  $\mathcal{C}(\mathcal{F}_{comp})$ corresponds to translation surfaces were we label exactly one separatrix for each singularity, accordingly to the degree of the singularity. 
\end{enumerate}

As we will see, the spaces  $\mathcal{C}(\mathcal{F}_{k})$ (resp.   $\mathcal{C}(\mathcal{F}_{sat})$) will appear naturally in the study of reduced (resp. labeled) Rauzy classes. We will then reduce the problem to the study of the space $\mathcal{C}(\mathcal{F}_{comp})$.

Note that the space $\mathcal{C}(\mathcal{F}_{comp})$ was also introduced recently by Marchese in \cite{Marchese} (Section~3.2).

\subsection{Moduli space of reduced suspension data.}
The set of suspension data associated to a labeled or reduced permutation is connected (in fact, convex).  Hence, for a Rauzy class $R$, all flat surfaces obtained from the Veech construction are in the same connected component of a stratum of the moduli space of Abelian differentials. 
In fact, according to Remark~\ref{data:separatrix} there is a natural map:
\begin{eqnarray*}
\Phi:\mathcal{H}_{R}=\left\{
\begin{array}{l}
(\pi,\zeta), \pi\in R,\\ \zeta \textrm{ susp. dat. for }\pi
\end{array}
\right \} {/ \mathcal{R}}&\rightarrow&  
\mathcal{C}(\mathcal{F}_{k}) \\
\left[(\pi,\zeta) \right] \qquad
 &\mapsto &
\big(X(\pi,\zeta),l(\pi,\zeta) \big)
\end{eqnarray*}

\begin{example}
Let us consider the permutations given in Example~\ref{ex:card:rc}. We have:
\begin{itemize}
\item For $\tau_n=\left(\begin{smallmatrix}
1&2&\ldots &n\\ n&n-1&\ldots &1
\end{smallmatrix}\right)$, the corresponding connected component is $\mathcal{H}^{hyp}(n-2)$ or $\mathcal{H}^{hyp}(\frac{n-1}{2}-1,\ \frac{n-1}{2}-1)$ depending on the parity of $n$. 
\item For $\pi_n= \left( \begin{smallmatrix} 0 & 2 & 3 & \dots & n-1 & 1 & n \\
n & n-1 & \dots & 3 & 2 & 1 & 0 \end{smallmatrix} \right)$, the corresponding connected component is $\mathcal{H}^{hyp}(0, n-2)$ or $\mathcal{H}^{hyp}(0, \frac{n-1}{2}-1,\frac{n-1}{2}-1)$, the singularity which is marked by the Veech construction being of degree 0.
 
 \item For $\pi=\left( \begin{smallmatrix}
1&2&3&4&5&6&7&8&9\\ 9&1&4&3&2&5&8&7&6
\end{smallmatrix} \right)$, the corresponding stratum is  $\mathcal{H}(1,1,1,1)=\mathcal{H}(1^4)$ and is connected.
\end{itemize}

\end{example}

When $R$ is a Rauzy class of reduced permutations, then the following theorem was proven in \cite{Boissy:rc}.

\begin{theorem}\label{th:Boissy:rc}
The map $\Phi$ is a homeomorphism on its image. The complement of the image of $\Phi$ is contained in a codimension~2 subset of $\mathcal{C}(\mathcal{F}_{k})$, which is connected. \end{theorem}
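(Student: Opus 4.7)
The plan is to verify four properties of $\Phi$: well-definedness, continuity, injectivity with continuous inverse, and that the image is the complement of a real codimension~$2$ subset of $\mathcal{C}(\mathcal{F}_k)$. The main difficulty will be the codimension~$2$ estimate, since a naive count would only give codimension~$1$.

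Well-definedness and continuity are essentially built into the construction. For $(\pi,\zeta)$ with $\pi\in R$ and $\zeta=(\lambda,\tau)$ a suspension datum, the zippered rectangle construction of Section~\ref{sec:suspension} produces a translation surface $X(\pi,\zeta)$ together with a canonical horizontal outgoing separatrix $l(\pi,\zeta)$ emanating from the leftmost vertex of the broken lines $L_t$ and $L_b$. By Remarks~\ref{rk:isometric} and~\ref{data:separatrix}, both are preserved by a single Rauzy--Veech step, so $\Phi$ descends to the quotient $\mathcal{H}_R$; continuity is immediate since, with $\pi$ fixed, the assignment $\zeta \mapsto (X(\pi,\zeta), l(\pi,\zeta))$ is precisely the expression of $(\lambda,\tau)$ as period coordinates on $\mathcal{C}(\mathcal{F}_k)$.

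The core step is the construction of a continuous local inverse. Given $(X,l)$ in the image, pick a short horizontal segment $I \subset X$ starting at the basepoint of $l$; the first return of the vertical flow to $I$ defines a reduced interval exchange transformation with permutation $\pi$ and length vector $\lambda$, while the return heights provide a suspension datum $\tau$. Shortening $I$ one discontinuity at a time corresponds to exactly one Rauzy--Veech move, so every choice of $I$ produces a pair $(\pi,\zeta)$ lying in the same class of $\mathcal{H}_R$. This gives a well-defined inverse on $\mathrm{Im}(\Phi)$, continuous because $(\lambda,\tau)$ are local holomorphic coordinates around $X$ in $\mathcal{C}(\mathcal{F}_k)$.

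It remains to identify the image up to a codimension~$2$ set and to establish connectedness. The inverse construction fails for every arbitrarily short $I$ only when two independent saddle connection conditions hold simultaneously: $l$ itself must be a horizontal saddle connection, \emph{and} a specific vertical separatrix adjacent to the top of the zippered rectangles must also be a saddle connection. Each is one real codimension~$1$ condition on period coordinates, so their conjunction carves out a real codimension~$2$ analytic subset of $\mathcal{C}(\mathcal{F}_k)$ containing the complement of $\mathrm{Im}(\Phi)$; the hard part here, as flagged above, is showing that neither condition alone suffices, by explicitly producing a valid transverse length whenever only one of them fails. Connectedness then follows by soft arguments: the set of suspension data over a fixed $\pi$ is convex, any two elements of $R$ are joined by Rauzy--Veech moves, so $\mathcal{H}_R$ is connected, and removing a real codimension~$2$ analytic subset from a complex analytic manifold cannot disconnect it, so $\mathcal{C}(\mathcal{F}_k)$ inherits connectedness from $\mathcal{H}_R$.
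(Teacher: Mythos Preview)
The paper does not contain a proof of this theorem. Immediately before the statement the author writes ``the following theorem was proven in \cite{Boissy:rc}'', and no argument is given here; the result is simply imported from that earlier paper and used as a black box (notably in the proof of Proposition~\ref{rc:to:cc}). So there is no ``paper's own proof'' to compare your attempt against.

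That said, your outline follows the standard route one finds in Veech's original work and in \cite{Boissy:rc}: zippered rectangles for the forward map, first return of the vertical flow to a horizontal segment for the inverse, and a codimension count for the exceptional locus. The sketch is reasonable at the level of a plan, but two points would need real work before it becomes a proof. First, your description of the exceptional set is imprecise: the obstruction to inverting $\Phi$ is not literally ``$l$ is a horizontal saddle connection \emph{and} a specific vertical separatrix is a saddle connection''; the actual characterization involves the simultaneous presence of a horizontal saddle connection along $l$ and a vertical one meeting it in a controlled way, and pinning this down carefully is exactly the content of the cited paper. Second, your connectedness argument is circular as written: you deduce connectedness of $\mathcal{C}(\mathcal{F}_k)$ from that of $\mathcal{H}_R$ via $\Phi$, but $\Phi$ only hits the complement of the bad set, so you are really using that removing a codimension~$2$ subset does not disconnect --- which is fine --- together with the fact that the image is connected, which gives connectedness of $\mathcal{C}(\mathcal{F}_k)$; just make the logic explicit.
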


%==================================================
\subsection{Moduli space of labeled suspension data}

The previous section describes what represents ``geometrically'' a reduced Rauzy diagram (or more precisely, the corresponding moduli space of suspension data): a suspension data for a reduced permutation, modulo the Rauzy--Veech induction corresponds to a translation surface with a marked separatrix, \emph{i.e.} an element of $\mathcal{F}_k(\mathcal{C})$.

In this section, we give an analogous description of a labeled Rauzy diagram. In this case, a suspension data for a labeled permutation, modulo the Rauzy--Veech induction corresponds to a translation surface where all the  separatrices are labeled, \emph{i.e} an element of $\mathcal{C}(\mathcal{F}_{sat})$. Then, we will show next  that we can reduce the problem to studying the moduli space of translation surfaces with a single marked separatrix for each singularities.

\begin{figure}[htb]
\begin{center}
\labellist
\tiny 
\hair 2pt
\pinlabel $l_1$ at 15 45
\pinlabel $l_2$ at 40 78
\pinlabel $l_3$ at 55 58
\pinlabel $l_4$ at 125 65
\normalsize
\pinlabel 1 at 16 62 
\pinlabel 2 at 40 58
\pinlabel 3 at 80 60
\pinlabel 4 at 150 58
\pinlabel 2 at 8 25 
\pinlabel 4 at 50 5
\pinlabel 3 at 120 0
\pinlabel 1 at 165 20
\endlabellist

\includegraphics[scale=1.4]{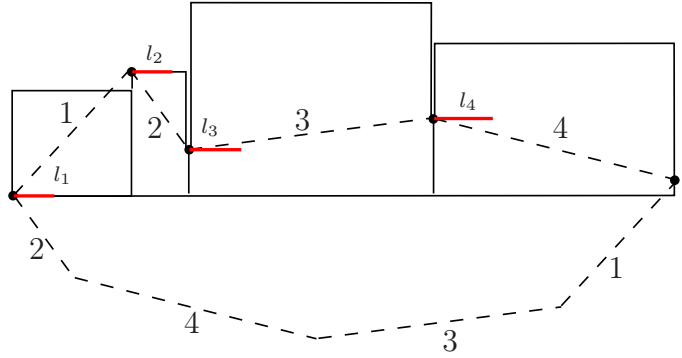}
\caption{A framing of a surface issued from the Veech construction. Here we have $l_i=F(i)$ for  $i=1,2,3,4$ and $l_1=l_2$.}
\label{complab}
\end{center}
\end{figure}

Let $\pi=(\pi_t,\pi_b)$  be a labeled permutation and let $\zeta$ be a suspension data for $\pi$. The zippered rectangle construction naturally defines a framed translation surface (see Figure~\ref{complab}) in the following way: for each rectangle in the Veech construction, the left vertical side contains a unique singularity. Hence, we can label the corresponding outgoing horizontal separatrix with the letter of the rectangle. Furthermore, two of these rectangles (corresponding to $\pi_t^{-1}(1)$ and $\pi_b^{-1}(1)$) intersect the corresponding singularity at a left corner (the bottom for one, the top for the other), and the corresponding horizontal outgoing separatrix is the same, so is labeled twice: once by the symbol $\pi_{t}^{-1}(1)$, and once by the symbol $\pi_b^{-1}(1)$. For all the other rectangles, the singularity on the left is in the interior of the left vertical side, hence, each corresponding separatrix is uniquely labeled. Therefore one gets a one-to-one map:
$$
F:\mathcal{A}' \to \mathcal{S}_X
$$
were $\mathcal{A}'$ is the quotient of $\mathcal{A}$ by the equivalence relation $\pi_t^{-1}(1)\sim \pi_b^{-1}(1)$. 

The element $F(\{\pi_t^{-1}(1), \pi_b^{-1}(1)\})$ will be refered as the \emph{doubly labeled} separatrix.

\begin{lemma}\label{Rlab:to:Hsat}
Let $(\pi,\zeta)$ as previously. The framed translation surface constructed as before from $(\pi,\zeta)$ and the one defined by $\mathcal{R}(\pi,\zeta)$ are the same element in $\mathcal{C}(\mathcal{F}_{sat})$. 
\end{lemma}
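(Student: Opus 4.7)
By Remark~\ref{rk:isometric} the underlying translation surfaces $X(\pi,\zeta)$ and $X(\pi',\zeta')=X(\mathcal{R}(\pi,\zeta))$ already represent the same point of the moduli space of Abelian differentials, so it suffices to check that the labeling of the horizontal outgoing separatrices defined by the zippered rectangle construction coincides in the two cases. I treat the type $t$ case; type $b$ is symmetric. Write $\alpha=\pi_t^{-1}(d)$ for the winner, $\beta=\pi_b^{-1}(d)$ for the looser, and $k=\pi_b(\alpha)\in\{1,\dots,d-1\}$. First I note that the equivalence relation on $\mathcal{A}$ identifying the two labels of the doubly labeled separatrix is preserved: one has $\pi_t'^{-1}(1)=\pi_t^{-1}(1)$ because $\pi_t=\pi_t'$, and $\pi_b'^{-1}(1)=\pi_b^{-1}(1)$ because the Rauzy--Veech rule gives $\pi_b'^{-1}(j)=\pi_b^{-1}(j)$ for $j\leq k$ (and $k\geq 1$). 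The doubly labeled separatrix itself---the outgoing horizontal ray at the singularity sitting at the leftmost corner of the zippered rectangle polygon---is manifestly untouched by the move, which only modifies the right part of the configuration.

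Next I verify, for each $\gamma\in\mathcal{A}$, that the singly labeled separatrix $F(\gamma)$ is unchanged. Intrinsically, $F(\gamma)$ is the outgoing horizontal ray starting at the singularity on the left vertical side of the rectangle $R_\gamma$ and entering $R_\gamma$. Geometrically, one step of Rauzy--Veech in type $t$ cuts $R_\alpha$ along the vertical line of abscissa $\lambda_\alpha-\lambda_\beta$ from its left side: the left portion stays in place as $R'_\alpha$ of width $\lambda'_\alpha=\lambda_\alpha-\lambda_\beta$, while the right portion of width $\lambda_\beta$ is detached and reglued under $R'_\alpha$ at combinatorial position $k+1$ of the bottom row, where it becomes $R'_\beta$. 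For $\gamma\notin\{\alpha,\beta\}$ the rectangle $R'_\gamma$ is literally the same region of $X$ as $R_\gamma$, so $F'(\gamma)=F(\gamma)$ trivially. For $\gamma=\alpha$ the left vertical side of $R'_\alpha$ coincides with that of $R_\alpha$, so the left singularity and outgoing direction are unchanged. For $\gamma=\beta$ the rectangle $R'_\beta$ is the same geometric region that was previously $R_\beta$ (its combinatorial location in the bottom row has simply shifted from $d$ to $k+1$), so again its left singularity and outgoing ray are preserved.

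The main point requiring care is the direction of the cut on $R_\alpha$: one must verify that it is the \emph{right} portion of $R_\alpha$ that is removed and reglued, so that the left singularity stays with $R'_\alpha$ rather than with $R'_\beta$. This is dictated by $J=I\setminus T(I_{\pi_b^{-1}(d)})$, which removes the rightmost segment of length $\lambda_\beta$ from the base interval; the corresponding first-return decomposition then cuts $R_\alpha$ on the right at width $\lambda_\alpha-\lambda_\beta$, as claimed. With this checked, all labels $\gamma\in\mathcal{A}$ satisfy $F'(\gamma)=F(\gamma)$, which is the desired equality in $\mathcal{C}(\mathcal{F}_{sat})$. A picture in the style of Figure~\ref{ietsusp} makes the cut-and-reglue transparent and constitutes the cleanest way to close the argument.
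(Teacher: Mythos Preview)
Your argument is the natural expansion of what the paper records in one line as ``an elementary check,'' and the strategy---fix the identification of surfaces via Remark~\ref{rk:isometric}, then verify $F'(\gamma)=F(\gamma)$ label by label---is exactly right. The treatment of the doubly labeled separatrix, of the labels $\gamma\notin\{\alpha,\beta\}$, and of $\gamma=\alpha$ is clean.

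There is one genuine muddle in the $\gamma=\beta$ step. You first say that the right portion of $R_\alpha$ ``becomes $R'_\beta$'', and a few lines later that ``$R'_\beta$ is the same geometric region that was previously $R_\beta$''. These two descriptions are incompatible, and neither is quite correct: in the new zippered-rectangle decomposition over $J$, a point of $I'_\beta=I_\beta$ first returns to $J$ only after passing through $T(I_\beta)$ (the removed right end of $I$) and then through the right portion of $R_\alpha$. Hence as a subset of the surface $R'_\beta=R_\beta\cup(\text{right width-}\lambda_\beta\text{ part of }R_\alpha)$, a rectangle of height $h_\beta+h_\alpha$. What saves your conclusion is that the extra piece of the left side of $R'_\beta$ is the vertical cut line interior to the old $R_\alpha$, which carries no singularity; so the unique singularity on the left side of $R'_\beta$ is still the one that sat on the left side of $R_\beta$, and the outgoing ray is unchanged. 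If you rewrite that paragraph with this corrected picture (or, even more simply, argue in the broken-line picture: all vertices $v^t_0,\dots,v^t_{d-1}$ of $L_t$ are literally unchanged since only $\zeta_\alpha$ is modified and $\alpha$ sits in top position $d$), the proof is complete.
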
 

\begin{proof}
This is an elementary check. 
\end{proof}

The following proposition transforms the initial combinatorial question into a topological one on the moduli space of Abelian differentials.

\begin{proposition}\label{rc:to:cc}
There is a natural  one-to-one correspondence between labeled Rauzy classes and connected components of the moduli space of framed translation surfaces $\mathcal{C}(\mathcal{F}_{sat})$. The degree of the mapping from a labeled Rauzy diagram to the reduced one is then precisely the degree of the natural mapping from a connected component of $\mathcal{C}(\mathcal{F}_{sat})$ to $\mathcal{C}(\mathcal{F}_{k})$.
\end{proposition}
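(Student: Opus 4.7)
The plan is to lift the reduced-case homeomorphism theorem (Theorem~\ref{th:Boissy:rc}) to the labeled setting, and then compare degrees via a commutative diagram.

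First, I would verify that the unordered pair $\{\pi_t^{-1}(1),\pi_b^{-1}(1)\}$ is preserved by both combinatorial Rauzy moves: a direct inspection of the formulas defining $\mathcal{R}_t$ and $\mathcal{R}_b$ shows that neither the top-left nor the bottom-left letter is moved (for $\mathcal{R}_t$, $\pi_t$ is unchanged and the modification of $\pi_b$ only affects indices $j\geq k+1\geq 2$, and symmetrically for $\mathcal{R}_b$). Hence for a fixed labeled Rauzy class $R_{lab}$, the quotient alphabet $\mathcal{A}'=\mathcal{A}/\sim$ is intrinsic to the class, of cardinality $d-1$, which agrees with the total number of outgoing horizontal separatrices on any suspended surface. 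Combined with Lemma~\ref{Rlab:to:Hsat}, this gives a well-defined map $\Phi_{sat}\colon \mathcal{H}_{R_{lab}} \to \mathcal{C}(\mathcal{F}_{sat})$, where the reference alphabet of the latter is taken to be $\mathcal{A}'$.

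Next, I would establish the labeled analog of Theorem~\ref{th:Boissy:rc}: $\Phi_{sat}$ is a homeomorphism onto a subset of a connected component of $\mathcal{C}(\mathcal{F}_{sat})$ whose complement has codimension two. The argument from \cite{Boissy:rc} adapts essentially verbatim, the only new ingredient being that the labels are transported transparently by the extended Rauzy--Veech induction, which is precisely the content of Lemma~\ref{Rlab:to:Hsat}. Since $\mathcal{H}_{R_{lab}}$ is connected (a finite union of convex open cones glued along combinatorial Rauzy moves), its image lies in a single connected component of $\mathcal{C}(\mathcal{F}_{sat})$. Conversely, any framed surface in $\mathcal{C}(\mathcal{F}_{sat})$ admits a generic horizontal transverse segment producing a labeled IET, whose permutation belongs to some labeled Rauzy class; this produces the inverse map, yielding the desired bijection between labeled Rauzy classes and connected components of $\mathcal{C}(\mathcal{F}_{sat})$.

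For the degree assertion, I would consider the commutative square whose rows are $\Phi_{sat}$ and $\Phi$, and whose columns are the forgetful maps $\mathcal{H}_{R_{lab}}\to \mathcal{H}_R$ (erasing the labels) and the covering $\mathcal{C}(\mathcal{F}_{sat})_{\mathrm{comp}}\to \mathcal{C}(\mathcal{F}_{k})$ (remembering only the doubly-labeled separatrix, which by construction is the Veech marked separatrix of degree $k$). The horizontal maps are homeomorphisms away from codimension-two loci, and the left vertical map has generic fiber of size $|R_{lab}|/|R|$: over a generic reduced datum $(\pi_{red},\zeta)$, the fiber equals the set of labeled lifts of $\pi_{red}$ lying in $R_{lab}$, and this count is independent of $\pi_{red}$ by invariance under Rauzy moves. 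Commutativity then forces the right vertical map to have the same degree.

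The main obstacle is thus the labeled analog of Theorem~\ref{th:Boissy:rc}; however, since Lemma~\ref{Rlab:to:Hsat} captures the only substantive new input, this should be a routine adaptation of the reduced argument. A secondary bookkeeping issue is the identification of the quotient alphabet $\mathcal{A}'$ with the reference alphabet of $\mathcal{C}(\mathcal{F}_{sat})$, which one handles once at the outset by declaring $\mathcal{A}'$ itself to be the alphabet of the relevant $\mathcal{F}_{sat}$-framings.
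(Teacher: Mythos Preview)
Your proposal is correct and follows essentially the same approach as the paper: build the commutative square with $\Phi_{sat}$ and $\Phi$, invoke Lemma~\ref{Rlab:to:Hsat} for well-definedness, cite \cite{Boissy:rc} for the homeomorphism-up-to-codimension-two property, and read off both the bijection and the equality of degrees. The only organizational difference is that the paper works with the set $R_{all}$ of \emph{all} labeled permutations lying over the reduced class $R$ at once (so that connected components of $\mathcal{H}_{R_{all}}$ are exactly the labeled Rauzy classes), whereas you work one labeled class $R_{lab}$ at a time; your additional bookkeeping (invariance of $\{\pi_t^{-1}(1),\pi_b^{-1}(1)\}$ and the identification of the alphabet $\mathcal{A}'$) is implicit in the paper and is a welcome clarification rather than a different argument.
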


\begin{proof}
Let $R_{all}$ be the set of labeled permutations that corresponds to $\mathcal{C}(\mathcal{F}_{k})$, and let: 
$$\mathcal{H}_{R_{all}}=\{(\pi,\zeta),\pi\in R_{all}, \ \zeta \textrm{ suspension data for }\pi\} /\mathcal{R}.$$
By Lemma~\ref{Rlab:to:Hsat} there is a map $\Phi_{sat}: \mathcal{H}_{R_{all}}\to\mathcal{C}(\mathcal{F}_{sat})$ such that the following diagram commutes.
$$
\begin{array}{ccc}
\mathcal{H}_{R_{all}} &\xrightarrow{\Phi_{sat}} & \mathcal{C}(\mathcal{F}_{sat})\\
\downarrow {p_0}& & \downarrow {p_1} \\
\mathcal{H}_{R}& \xrightarrow{\ \Phi\ } &\mathcal{C}(\mathcal{F}_{k})
\end{array}
$$
Where $p_0$ is the canonical map that replace a labeled permutation by a reduced one, and $p_1$ is the map that ``forget'' all labels except for the doubly labeled separatrix.
The maps $\Phi$ and $\Phi_{sat}$ are homeomorphisms on their images, and onto up to codimension 2 subsets (see \cite{Boissy:rc}, Section 2 for details.). Hence, $\mathcal{H}_{R_{all}}$ and $\mathcal{C}(\mathcal{F}_{sat})$ have the same number of connected components, and the degree of the maps $p_1$ and $p_{0}$, when restricted to a connected component are the same. But the degree of the map $p_{0}$ restricted to a connected component is precisely the degree of natural map from the labeled Rauzy diagram to the reduced one.
\end{proof}

%==================================================
\subsection{Moduli space of translation surfaces with frame.} $\ $\\

There are obvious invariants for the connected components of the moduli space $\mathcal{C}(\mathcal{F}_{sat})$. Indeed, two elements of $\mathcal{C}(\mathcal{F}_{sat})$ that are in the same connected component must satisfy the following property:
\begin{itemize}
\item The labels that correspond to a given singularity on one surface must correspond to a same singularity on the other surface.
\item The canonical cyclic order on the set of labels obtained by rotating clockwise around a singularity must be the same.
\end{itemize}

Hence, a connected component of $\mathcal{C}(\mathcal{F}_{sat})$ is clearly isomorphic to a connected component of $\mathcal{C}(\mathcal{F}_{comp})$

Let $\alpha\in \mathcal{A}_k$ be a label associated to a degree $k$ singularity. There is a natural covering $p_{\alpha}$ from $\mathcal{C}(\mathcal{F}_{comp})$ to $\mathcal{C}(\mathcal{F}_{k})$ obtained by ``forgetting'' all the markings, except the one that corresponds to $\alpha$. 
The following proposition summarizes the discussion of this section, and is equivalent to Proposition~\ref{prop:intro}.

\begin{proposition}\label{deg:coverings}
Let $R_{lab}$ be a labeled Rauzy class and $R$ be the corresponding reduced one. Let $k$ be the degree of the marked singularity associated to $R$. 
The ratio $\frac{|{R_{lab}}|}{|R |}$ equals the degree of the canonical projection $p_\alpha:\mathcal{C}(\mathcal{F}_{comp})\to \mathcal{C}(\mathcal{F}_{k})$, restricted to a connected component of $\mathcal{C}(\mathcal{F}_{comp})$, where $\alpha$ is a label associated to a degree $k$ singularity.
\end{proposition}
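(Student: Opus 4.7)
The plan is to combine Proposition~\ref{rc:to:cc} with the observation, sketched just before this statement, that a connected component of $\mathcal{C}(\mathcal{F}_{sat})$ is isomorphic to a connected component of $\mathcal{C}(\mathcal{F}_{comp})$. By Proposition~\ref{rc:to:cc}, the ratio $|R_{lab}|/|R|$ is the degree, restricted to a connected component, of the forgetful map $p_1: \mathcal{C}(\mathcal{F}_{sat}) \to \mathcal{C}(\mathcal{F}_k)$ which keeps only the doubly labeled separatrix. So the task reduces to identifying this map, on connected components, with some $p_\alpha$ for a suitably chosen $\alpha \in \mathcal{A}_k$.

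First I would make the isomorphism precise. Fix a connected component $\mathcal{C}_0$ of $\mathcal{C}(\mathcal{F}_{sat})$. The two discrete invariants listed at the start of the present subsection, namely the partition of $\mathcal{A}$ into subsets of labels attached to each singularity and the cyclic order of labels obtained by rotating clockwise around each singularity, are locally constant, hence constant on $\mathcal{C}_0$. Once these invariants are frozen, specifying a full labeling of the horizontal outgoing separatrices is equivalent, for each singularity $s$, to specifying which separatrix receives a single designated label: the cyclic order then recovers all remaining labels at $s$. Picking one such designated label at each singularity of degree $k$ defines a subalphabet $\mathcal{A}_k \subset \mathcal{A}$ of cardinality $n_k$, and yields a continuous map $\mathcal{C}_0 \to \mathcal{C}(\mathcal{F}_{comp})$ which is readily seen to be a homeomorphism onto a connected component of $\mathcal{C}(\mathcal{F}_{comp})$.

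Next I would match the forgetful maps. For the marked singularity of degree $k$, I would choose the designated label to be one of the two labels $\pi_t^{-1}(1), \pi_b^{-1}(1)$ carried by the doubly labeled separatrix, and call it $\alpha$. Then $\alpha \in \mathcal{A}_k$, and by construction the separatrix retained by $p_1$ is exactly the one labeled $\alpha$, so the square
\[
\begin{array}{ccc}
\mathcal{C}_0 & \xrightarrow{\ p_1\ } & \mathcal{C}(\mathcal{F}_k) \\
\downarrow & & \Big\| \\
\mathcal{C}(\mathcal{F}_{comp}) & \xrightarrow{\ p_\alpha\ } & \mathcal{C}(\mathcal{F}_k)
\end{array}
\]
commutes, with the left vertical arrow a homeomorphism onto a connected component of $\mathcal{C}(\mathcal{F}_{comp})$. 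Since degrees of coverings are preserved under homeomorphisms of the base, the degree of $p_\alpha$ on that component equals the degree of $p_1$ on $\mathcal{C}_0$, which by Proposition~\ref{rc:to:cc} is exactly $|R_{lab}|/|R|$.

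The whole argument is a bookkeeping of local data already controlled by earlier propositions; the main step requiring genuine care is checking that the cyclic order invariant is constant on $\mathcal{C}_0$. This reduces to a monodromy statement: along any continuous deformation of the framed translation structure, the labels move continuously with the separatrices, so the cyclic order cannot jump. Once this is in place, the remaining verifications are formal.
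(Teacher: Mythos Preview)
Your proof is correct and follows essentially the same approach as the paper: invoke Proposition~\ref{rc:to:cc} and then use the identification of connected components of $\mathcal{C}(\mathcal{F}_{sat})$ with connected components of $\mathcal{C}(\mathcal{F}_{comp})$. The paper's own proof is a terse three-sentence version of this, merely noting that $\mathcal{C}(\mathcal{F}_k)$ is connected (from \cite{Boissy:rc}), asserting the isomorphism of connected components, and applying Proposition~\ref{rc:to:cc}; you have spelled out the isomorphism and the matching of forgetful maps in more detail than the paper does.
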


\begin{proof}
It was proven in \cite{Boissy:rc} that $\mathcal{C}(\mathcal{F}_{k})$ is connected.   
A connected component of $\mathcal{C}(\mathcal{F}_{comp})$ is naturally isomorphic to a connected component of $\mathcal{C}(\mathcal{F}_{sat})$. Then, we just apply Proposition~\ref{rc:to:cc}.
\end{proof}

%%%%%%%%%%%%%%%%%%%%%%%%%%%%%%%%%%%%
%%%%%%%%%%%%%%%%%%%%%%%%%%%%%%%%%%%%
\section{Topological invariants for framed translation surfaces}\label{sec4}

From now, a framed translation surface will be an element in $\mathcal{C}(\mathcal{F}_{comp})$.

As seen in Proposition~\ref{deg:coverings}, the formula given in Theorem~\ref{MT} is related to the number of connected components of  $\mathcal{C}(\mathcal{F}_{comp})$. 
Also, the degree of the covering $\mathcal{C}(\mathcal{F}_{comp})\to \mathcal{C}$, restricted to a connected component of $\mathcal{C}(\mathcal{F}_{comp})$, is clearly $\frac{\Pi_{i=1}^r n_i! (k_i+1)^{n_i}}{c}$, were $c$ is the number of connected component of $\mathcal{C}(\mathcal{F}_{comp})$, since ${\Pi_{i=1}^r n_i! (k_i+1)^{n_i}}$ is the number of possible frame $F\in \mathcal{F}_{comp}$ on a surface.

In this section, we give lower bounds on the number of connected components of  $\mathcal{C}(\mathcal{F}_{comp})$. There are two cases.
\begin{itemize}
\item The ``hyperelliptic case''. If the corresponding surfaces are all hyperelliptic and have two singularities of degree $g-1$, with possibly some added regular marked points. Then, $\mathcal{C}(\mathcal{F}_{comp})$ cannot be connected due to the extra symmetries of the underlying translation surfaces.
\item The ''odd singularity case''. When there are odd degree singularities, we can define on $\mathcal{C}(\mathcal{F}_{comp})$ a topological invariant which generalizes the well known \emph{spin structure invariant} for the moduli space of Abelian differentials, found by Kontsevich and Zorich. 
\end{itemize}

Recall that a Riemann $S$ surface is hyperelliptic if there exists an involution $\tau$ such that $S/\tau=\mathbb{CP}^1$. Since $\mathbb{CP}^1$ does not have any nontrivial Abelian differential, then for any translation surface $(S,\omega)$ such that $S$ is hyperelliptic the corresponding involution $\tau$ satisfies
$\tau^* \omega=-\omega$. In particular, this means that the translation surface $(S,\omega)$ have an isometric involution which reverse the vertical direction. Kontsevich and Zorich have shown that for each genus $g\geq 2$, there are exactly two strata that contain a connected component which consists only of hyperelliptic translation surfaces. These are the strata $\mathcal{H}(2g-2)$ and $\mathcal{H}(g-1,g-1)$. Of course, for each strata, one can also define new ones by adding regular marked points on the surfaces. 

\begin{proposition}\label{low:bound:hyp}
Assume that $\mathcal{C}$ consists only of hyperelliptic translation surfaces with two singularities of degree $g-1$ and $n_0$ regular marked points.  Then, $\mathcal{C}(\mathcal{F}_{comp})$ has at least $g$ connected components.  
\end{proposition}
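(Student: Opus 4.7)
My plan is to construct a $\Z/g\Z$-valued topological invariant on $\mathcal{C}(\mathcal{F}_{comp})$ that is locally constant and attains all $g$ values; this would force $\mathcal{C}(\mathcal{F}_{comp})$ to have at least $g$ connected components. The first ingredient is the hyperelliptic involution: any $X\in\mathcal{C}$ carries a unique involution $\tau$ with $\tau^*\omega=-\omega$, and by definition of the hyperelliptic component of $\mathcal{H}(g-1,g-1,0^{n_0})$, $\tau$ exchanges the two zeros $p_1,p_2$ of degree $g-1$. Since the flat differential $d\tau$ acts as rotation by $\pi$ on each tangent cone, $\tau$ sends outgoing horizontal separatrices at $p_1$ bijectively onto incoming horizontal separatrices at $p_2$. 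Composing with the canonical ``shift by $\pi$'' map at $p_2$ (which pairs an incoming separatrix of direction $\theta$ with the outgoing one of direction $\theta+\pi$) would produce a canonical, cyclic-order-preserving bijection $\sigma_\tau\colon\mathrm{out}(p_1)\to\mathrm{out}(p_2)$.

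Next I would define the invariant. Choose identifications $\mathrm{out}(p_1)\cong\Z/g\Z\cong\mathrm{out}(p_2)$ respecting the canonical cyclic orders on the two cones and in which $\sigma_\tau$ becomes the identity (these identifications are then unique up to a common cyclic shift). Given a framing $F\in\mathcal{F}_{comp}$, let $S_\alpha$ be the singularity carrying the label $\alpha\in\mathcal{A}_{g-1}$ and $S_\beta$ the other one, and set
\[
d(F)\ =\ \mathrm{ind}(F(\beta))\ -\ \mathrm{ind}(\sigma_\tau(F(\alpha)))\ \in\ \Z/g\Z,
\]
both indices being read in the identification of $\mathrm{out}(S_\beta)$. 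The common cyclic shift freedom leaves $d(F)$ unchanged, so $d$ is well-defined. Since $\tau$, the singularities and their outgoing horizontal separatrices all depend continuously on $X$ in local charts of the covering $\mathcal{C}(\mathcal{F}_{comp})\to\mathcal{C}$, and since $\sigma_\tau$ is a discrete datum, the function $d$ is locally constant on $\mathcal{C}(\mathcal{F}_{comp})$, and hence constant on each connected component.

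Finally, I would check surjectivity. On a generic $X\in\mathcal{C}$ (with no translation automorphism other than the identity) the fibre of the covering consists of exactly $2g^2$ framings, and a direct count shows that for each $d_0\in\Z/g\Z$ there are exactly $2g$ framings with $d(F)=d_0$: pick $S_\alpha\in\{p_1,p_2\}$ and $F(\alpha)$ among the $g$ outgoing separatrices at $S_\alpha$, which then forces $F(\beta)$ via the equation $d(F)=d_0$. Hence every value of $d$ is attained, and each of the $g$ sets $d^{-1}(d_0)$ is a non-empty, open and closed subset of $\mathcal{C}(\mathcal{F}_{comp})$, therefore a union of connected components. The main difficulty will be the first step: combining the orientation reversal produced by $\tau$ with the combinatorics of the cone of total angle $2\pi g$ to obtain a well-defined, cyclic-order-preserving bijection $\sigma_\tau$ independent of auxiliary choices; once this is settled, the remainder of the argument is a continuity statement followed by an elementary count.
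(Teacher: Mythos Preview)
Your proof is correct and follows essentially the same approach as the paper: both use the hyperelliptic involution to transport the marked separatrix at one zero to an (incoming) separatrix at the other zero, and then measure its offset from the second marked separatrix---the paper records this offset directly as an angle $\theta\in\{\pi,3\pi,\dots,(2g-1)\pi\}$, while you encode it as an element of $\Z/g\Z$ after composing with a shift by $\pi$ and choosing cyclic coordinates. The paper's version is slightly more economical in that it avoids the auxiliary identifications with $\Z/g\Z$ and the bijection $\sigma_\tau$, simply observing that the angle between $\tau(l_0)$ and $l_1$ is a locally constant invariant taking $g$ distinct values.
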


\begin{proof}
Let $X\in \mathcal{C}(\mathcal{F}_{comp})$. 
We denote by $l_0$ and $l_1$ the marked separatrices associated to the degree $g-1$ singularities, and we denote by $P_i$ the singularity corresponding to $l_i$.

 The hyperelliptic involution interchanges $P_0$ and $P_1$. 
 Hence, there is a well defined (incoming) separatrix $l_1'$  adjacent to $P_1$ which is the image of $l_0$.  The angle $\theta$  between $l_1'$ and $l_1$ is an odd multiple of $\pi$ and is constant under continuous deformations of $X$ inside the ambient stratum. Note that the value of $\theta$ does not depend on any choice.
Hence the value of $\theta$ is an invariant of the  connected component of $\mathcal{C}(\mathcal{F}_{comp})$.  Since all the values $\pi,3\pi,\dots ,(2g-1)\pi$ are possible, we see that the number of connected components of $\mathcal{C}(\mathcal{F}_{comp})$ is at least $g$. 
\end{proof}

\begin{proposition}\label{prop:odd:sing}
Assume that $\mathcal{C}$ consists of translation surfaces with at least one odd degree singularity.  Then, $\mathcal{C}(\mathcal{F}_{comp})$ has at least $2$ connected components.  
\end{proposition}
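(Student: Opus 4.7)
My plan is to construct a $\mathbb{Z}/2$-valued topological invariant $\Phi$ on $\mathcal{C}(\mathcal{F}_{comp})$ that generalizes the Kontsevich--Zorich spin parity, and then to exhibit two framed surfaces on which $\Phi$ takes distinct values. Recall that when every singularity has even degree, the spin parity is the Arf invariant of the quadratic form $q(\gamma) = \mathrm{ind}(\gamma) + 1 \pmod 2$ on $H_1(X, \mathbb{Z}/2)$, where $\mathrm{ind}(\gamma)$ is the winding number of the tangent to $\gamma$ with respect to the horizontal direction. The form $q$ descends from $H_1(X \setminus \Sigma, \mathbb{Z}/2)$ to $H_1(X, \mathbb{Z}/2)$ precisely because a small loop $\delta_i$ around $P_i$ satisfies $q(\delta_i) = (k_i+1) + 1 \equiv k_i \pmod 2$, which vanishes exactly when $k_i$ is even. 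When some $k_i$ is odd, the marked separatrices of the frame supply the extra data needed to correct $q$.

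Concretely, for $(X, F) \in \mathcal{C}(\mathcal{F}_{comp})$ with marked outgoing separatrices $l_i$ at $P_i$, I would define
\[
q_F(\gamma) = \mathrm{ind}(\gamma) + 1 + \sum_{i \,:\, k_i \text{ odd}} (\gamma \cdot l_i) \pmod 2
\]
for closed curves $\gamma \subset X \setminus \Sigma$ transverse to the $l_i$'s, with $\gamma \cdot l_i$ the mod-$2$ intersection. A small loop $\delta_i$ crosses $l_i$ exactly once and no other $l_j$; hence $q_F(\delta_i) \equiv 0 \pmod 2$ whether $k_i$ is even (no correction, $k_i + 2 \equiv 0$) or odd (correction $1$, $k_i + 3 \equiv 0$), so $q_F$ descends to $H_1(X, \mathbb{Z}/2)$. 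The Arf relation $q_F(\gamma_1 + \gamma_2) \equiv q_F(\gamma_1) + q_F(\gamma_2) + \gamma_1 \cdot \gamma_2 \pmod 2$ follows from the standard identity for $\mathrm{ind} + 1$ combined with the linearity of the correction term. Thus $q_F$ is a quadratic refinement of the mod-$2$ intersection form on $H_1(X, \mathbb{Z}/2)$, with Arf invariant $\Phi(X, F) \in \mathbb{Z}/2$. Since $\mathrm{ind}$ and mod-$2$ intersections with the $l_i$'s are locally constant under continuous deformations, $\Phi$ is locally constant on $\mathcal{C}(\mathcal{F}_{comp})$, hence constant on each connected component.

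The main obstacle is the final step: showing $\Phi$ actually attains both values. The strategy is to fix $X \in \mathcal{C}$ and compare frames $F, F'$ differing only at one odd-degree singularity $P_i$, with $l_i'$ obtained by rotating $l_i$ by $j$ sectors ($0 < j \le k_i$). The difference $q_{F'} - q_F$ is the intersection pairing with a relative class in $H_1(X, \Sigma; \mathbb{Z}/2)$ corresponding to the \textquotedblleft slit\textquotedblright{} between $l_i$ and $l_i'$; for an appropriate $X$ and $j$, this class is nonzero and flips the Arf invariant. The subtle point is that naive modifications leave $\Phi$ unchanged, so one must exhibit a specific $X$ and rotation for which the change is nontrivial. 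This is typically achieved by constructing an explicit model surface by cylinder decomposition, in which a symplectic basis of $H_1(X, \mathbb{Z}/2)$, together with the intersections of the slit and of each $l_i$ with this basis, can be read off combinatorially.
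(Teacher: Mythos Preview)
Your invariant is the right idea, but the formula
\[
q_F(\gamma) = \mathrm{ind}(\gamma) + 1 + \sum_{k_i \text{ odd}} (\gamma \cdot l_i) \pmod 2
\]
is not well-defined as written. The object $l_i$ is a horizontal outgoing separatrix; for a generic translation surface the horizontal flow is minimal, so $l_i$ is a dense half-leaf and $\gamma \cdot l_i$ has infinitely many points. If instead you truncate $l_i$ to a short arc of length~$\varepsilon$, the value of $\gamma\cdot l_i$ depends on~$\varepsilon$: a small loop $\delta_i$ of radius $r<\varepsilon$ crosses $l_i$ once, while the homologous loop of radius $r>\varepsilon$ does not cross it at all. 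Hence $q_F(\delta_i)$ takes both values $0$ and $1$ on homologous curves, and $q_F$ does not descend to $H_1(X\setminus\Sigma,\mathbb{Z}/2)$, let alone to $H_1(X,\mathbb{Z}/2)$. The underlying obstruction is that the correction you need is a class in $H^1(X\setminus\Sigma,\mathbb{Z}/2)\cong H_1(X,\Sigma;\mathbb{Z}/2)$ with boundary the set of odd singularities, and a single separatrix does not determine such a class.

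The paper's construction avoids this by a surgery. Given the frame, one pairs the odd singularities (via a fixed order on~$\mathcal{A}$), rotates the two marked separatrices of each pair to vertical, chooses an arc transverse to the horizontal foliation joining them (Hubbard--Masur), and cuts--and--pastes a thin parallelogram along that arc. This merges each pair into a single even-degree zero, producing a surface with only even singularities, whose classical spin parity is the invariant. The key point---that the result does not depend on the chosen arcs---is proved by realizing everything in the zippered-rectangles model, where the surgery becomes a purely combinatorial insertion of a symbol in the permutation. This is exactly the step your approach is missing: a mechanism that makes the ``arc from $l_i$'' canonical up to something that does not affect the Arf invariant.

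Your surjectivity sketch inherits the same problem, since the ``slit between $l_i$ and $l_i'$'' is again a difference of dense half-leaves. In the paper this step is done by an explicit homology computation on the surface obtained after surgery: rotating one marked separatrix by~$2\pi$ changes the index of a single new basis element $b_0$ from $0$ to $1$, while $\mathrm{ind}(a_0)+1$ is odd, so the spin parity flips. If you want to salvage your approach, one route is to pair the odd singularities and replace each $l_i$ by a bona fide relative cycle (an arc between the paired zeroes determined by the marked directions); you would then need an independent argument that the Arf invariant does not depend on the choice of such arcs, which is precisely what the paper's Veech-construction lemma provides.
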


We postpone the proof of this proposition to the end of this section. We first define the ``spin structure'' invariant for $\mathcal{C}(\mathcal{F}_{comp})$. 

Let $X$ be a completely framed surface with at least one odd degree singularity. Note that the number of odd degree singularities of $X$ is necessarily even, since the sum of the degree of the singularities must be equal to $2g-2$ by the Riemann-Roch formula.
For each singularity, we have given a name $\alpha\in \mathcal{A}$ to a horizontal outgoing separatrix. Now let us fix a total order on the finite alphabet $\mathcal{A}$, so that the marked separatrices are naturally ordered. This order induces an oriented pairing of the separatrices corresponding to odd degree singularities. 

\begin{figure}[htb]
\begin{center}
\labellist
\tiny 
\hair 2pt
\pinlabel $l_1^+$ at 120 400   \pinlabel $l_2^-$ at 80 470   \pinlabel $a$ at 50 490  \pinlabel $b$ at 120 510
\pinlabel $c$ at 160 530     \pinlabel $d$ at 220 530    \pinlabel $e$ at 260 500  \pinlabel $e$ at 50 410
\pinlabel $d$ at 100 370   \pinlabel $c$ at 160 385   \pinlabel $b$ at 200 400   \pinlabel $a$ at 280 420

\pinlabel $e$ at 40 60   \pinlabel $d$ at 80 20   \pinlabel $0$ at 130 -10  \pinlabel $c$ at 170 20
\pinlabel $b$ at 200 50     \pinlabel $a$ at 270 60   \pinlabel $a$ at 35 140   \pinlabel $0$ at 80 180  
 \pinlabel $b$ at 110 160   \pinlabel $c$ at 145 180   \pinlabel $d$ at 220 190 \pinlabel $e$ at 260 150

\pinlabel $e$ at 440 60 \pinlabel $d$ at 480 20   \pinlabel $0$ at 530 -10  \pinlabel $c$ at 570 20
\pinlabel $b$ at 600 50     \pinlabel $a$ at 670 60   \pinlabel $a$ at 435 140   \pinlabel $0$ at 480 180  
 \pinlabel $b$ at 510 160   \pinlabel $c$ at 545 180   \pinlabel $d$ at 620 190 \pinlabel $e$ at 660 150

\endlabellist
\includegraphics[scale=0.5]{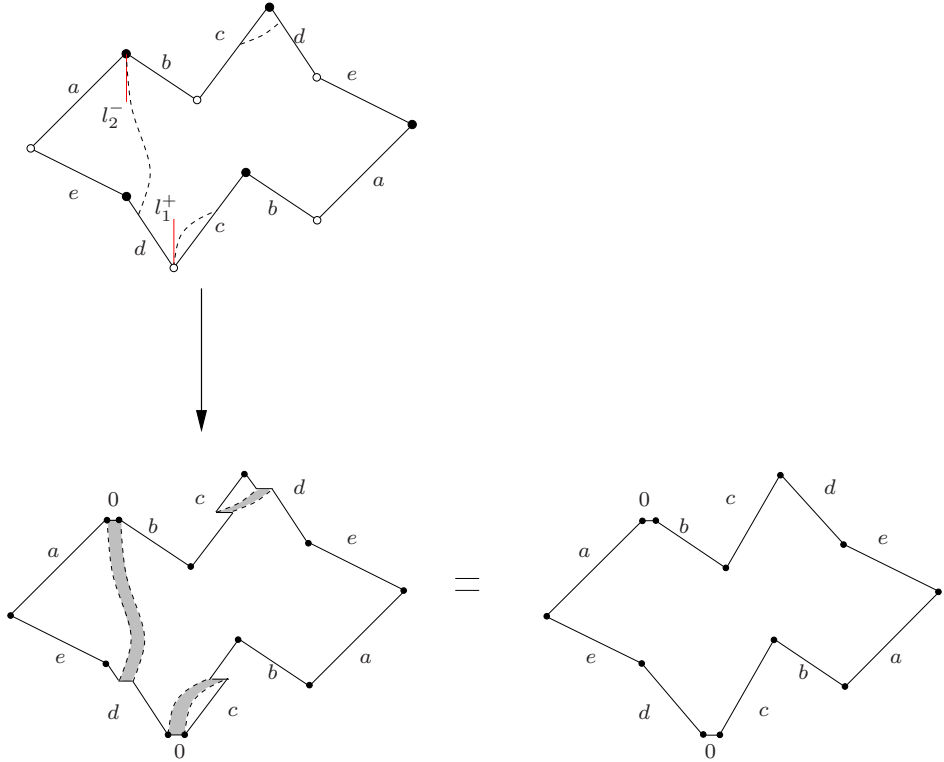}
\caption{Building a surface with only even degree singularities.}
\label{curvpoly}
\end{center}
\end{figure}

Now let $(l_1,l_2)$ be such a pair. We rotate the first separatrix clockwise by an angle $\pi/2$, and the second one counterclockwise by an angle $\pi/2$. We obtain pairs of vertical separatrices, the first one being on the positive direction, the second one on the negative direction. We denote by $(l_1^+,l_2^-)$ this pair of positive/negative vertical separatrices,  let us also denote by $k_1,k_2$ the degree of the corresponding zeroes. According to Hubbard--Masur \cite{Hubbard:Masur}, there exists a (smooth) path $\nu$ transverse to the horizontal foliation which starts being tangent to $l_1^+$ and ends being tangent to $l_2^-$. Now we consider the following surgery: we cut the surface $X$  along the path and paste in a ``curvilinear parallelogram'' with two small horizontal sides and two opposite sides that are isomorphic to $\nu$ (see Figure~\ref{curvpoly}).  Then, gluing together the horizontal sides of the parallelogram, one obtains a translation surface where the pair of singularities corresponding to $l_1^+,l_2^-$ have become a singularity of degree $k_1+k_2+2$, which is even. We will refer to this construction as the \emph{parallelogram construction with parameters} $(l_1,l_2)$.

Then, we apply this procedure on all the pairs of vertical separatrices that were defined previously. The resulting translation surface only has even singularities and is of genus at least 3, since the minimal genus case corresponds  to starting from $\mathcal{H}(1,1)$ and ending in $\mathcal{H}(1+1+2)$.

Recall that the strata of the moduli space of Abelian differentials corresponding to only even degree singularities are not connected as soon as the genus is greater than or equal to 3, and are distinguished by the parity of spin structure. We will prove the following lemma:
\begin{lemma}
The connected component of the resulting surface in the previous construction doesn't depend on the chosen paths. 
\end{lemma}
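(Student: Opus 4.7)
The aim is to show that for a fixed framed surface $X$ with fixed total order on $\mathcal{A}$ (hence fixed pairing of odd-degree marked separatrices), the connected component of the target stratum containing the surface $Y$ produced by the parallelogram constructions is independent of the choices of transverse paths. My strategy is a continuity argument: I would show that any two valid systems of paths can be connected by moves that are realized as continuous deformations in the target stratum, so the resulting surfaces stay in a single connected component.

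I would first reduce to varying one path at a time. By shrinking the widths of the inserted parallelograms, we may always assume their supporting neighborhoods are pairwise disjoint, so the surgery at different pairs can be done independently and in any order. Fixing all paths but one $\nu$ for the pair $(l_1^+, l_2^-)$, continuous variation of $\nu$ within a single isotopy class of transverse paths (with a compatibly chosen width parameter) produces a continuous one-parameter family of surfaces in the target stratum, hence stays in one connected component. This settles the case of isotopic choices.

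The substantive step is passing between different isotopy classes of transverse paths from $l_1^+$ to $l_2^-$. Two such classes differ by a loop in $X$ in the complement of the horizontal separatrices, and this loop can be decomposed into elementary moves, each crossing exactly one horizontal separatrix emanating from a third singularity $Q$. I would realize each elementary move as a continuous deformation of the resulting surface $Y$ in the target stratum: after surgery, the inserted parallelogram appears as a thin region in $Y$; by pumping a small additional vertical period into it, sliding it across the position corresponding to $Q$, and then shrinking it back on the other side, one obtains a continuous path in the stratum from $Y(\nu_0)$ to $Y(\nu_1)$. Since the target stratum is locally path-connected, this continuous path lies in a single connected component.

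The main obstacle is controlling this sliding move rigorously, ensuring that as the parallelogram is dragged past $Q$ the surface remains in the open stratum and does not degenerate to a boundary component (for example by producing a horizontal saddle connection to $Q$). I would address this by a generic transversality argument in period coordinates: since the loci of degeneration are of positive codimension, a small perturbation of the deformation avoids them and gives an honest path in the stratum. Once this local argument is in place, iterating over the elementary moves connects any two isotopy classes of paths and completes the proof.
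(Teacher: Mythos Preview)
Your approach is genuinely different from the paper's, and the difference is instructive. The paper does not attempt to connect the surfaces obtained from different paths by a deformation inside the target stratum at all. Instead it observes that, after a small perturbation, the framed surface $X$ can be presented by the Veech (zippered rectangles) construction from some data $(\pi,\lambda,\tau)$. In this presentation the parallelogram construction with parameters $(l_1,l_2)$ is realized \emph{combinatorially}: the new labeled permutation is obtained from $\pi$ by inserting one new symbol on the top row just before the label corresponding to $l_2$ and the same symbol on the bottom row just before the label corresponding to $l_1$. This new permutation visibly depends only on the separatrices $l_1,l_2$ (hence on the ordering of $\mathcal{A}$) and not on the transverse path $\nu$. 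Since any two surfaces coming from the same permutation lie in the same connected component, path-independence is immediate.

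By contrast, your argument has a real gap at the crucial step. Within a fixed isotopy class of transverse paths your continuity reasoning is fine. The problem is the ``elementary move'' crossing a horizontal separatrix of a third singularity $Q$. You propose to realize it in the target stratum by ``pumping a small additional vertical period'' into the inserted parallelogram and ``sliding it across the position corresponding to $Q$''. But this is not a well-defined operation on the flat surface $Y$: the inserted parallelogram is not a movable subsurface of $Y$, and there is no obvious one-parameter family of flat metrics interpolating between $Y(\nu_0)$ and $Y(\nu_1)$ that one can read off from the description. In period coordinates one can certainly vary the holonomy of the newly created handle, but you have not argued that any such variation lands on $Y(\nu_1)$, nor that the putative family avoids leaving the stratum exactly at the moment the parallelogram ``meets'' $Q$ --- which is precisely where the combinatorics of the construction changes, not a generic codimension phenomenon you can perturb away. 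Making this rigorous would essentially require an independent identification of $Y(\nu_0)$ and $Y(\nu_1)$, which is what the paper's combinatorial argument supplies directly.
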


\begin{proof}
Up to a small deformation of the surface $X$, one can assume that it is obtained by the Veech construction starting from a data $(\pi,\lambda,\tau)$. Then, for a pair $l_1,l_2$ of separatrices as previously, the  surface obtained after the parallelogram construction with parameters $l_1,l_2$  also arises from the Veech construction, where the corresponding permutation is obtained from $\pi$ by  adding a new label on the top before the symbol corresponding to $l_2$ and the same label on the bottom before the label corresponding to $l_1$. For instance, in Figure~\ref{curvpoly}, the labeled permutation $\left(\begin{smallmatrix} a&b&c&d&e \\ e&d&c&b&a
\end{smallmatrix}\right)$ becomes $\left(\begin{smallmatrix} a&0&b&c&d&e \\ e&d&0&c&b&a
\end{smallmatrix}\right)$, since $l_1$ corresponds to $c$ and $l_2$ corresponds to $b$.

In particular, the permutation after removing all the odd degree singularities doesn't depend on the choices of the paths, but only on the order that we have chosen on $\mathcal{A}$. 
\end{proof}

\begin{proof}[Proof of Proposition~\ref{prop:odd:sing}]
We just need to show that the spin structure invariant defined before reaches all the possible values. First we recall Kontsevich--Zorich formula for  the parity of spin structure for a translation surface $X'$ of genus $g'$ with only even degree singularities. Let $a_1,b_1,\dots ,a_{g'},b_{g'}$ be a collection of closed paths that represent a symplectic basis of the homology $H_1(X'; \mathbb{Z})$, and such that each path does not pass trough any singularity. We can assume that the $a_i,b_i$ are parametrized by the arc length. For each $a_i$ (resp. $b_i$), we define $\textrm{ind}(a_i)$  (resp. $\textrm{ind}(b_i)$) to be the index of the map $\mathbb{S}^1\to \mathbb{S}^1$, $t\mapsto a_i'(t)$ (resp. $t\mapsto b_i'(t))$). Then, the \emph{parity of spin structure} of $X'$ is defined by the following formula:
$$
\sum_{i=1}^{g'} (\textrm{ind}(a_i)+1)(\textrm{ind}(b_i)+1) \mod 2
$$ 
The result does not depend on the choice of the symplectic basis and is therefore an invariant of connected components of the strata of the moduli space of Abelian differentials (see \cite{KoZo}).

In the definition of the invariant for $\mathcal{C}(\mathcal{F}_{comp})$, we successively glue together some pairs of odd degree singularities. We can also glue all pairs except one and therefore we can assume that there is only one pair $(P_1,P_2)$ of odd degree singularities, of degree $k_1$ and $k_2$ respectively, on the surface. 

We present such surface $X$  as coming from the Veech construction with parameters $(\pi,\lambda,\tau)$. Let $g$ be the genus of this surface. As in Figure~\ref{curvpoly}, we have a pair $l_1^+,l_2^-$  of positive/negative vertical separatrix, we choose a path $\gamma$ transverse to the horizontal foliation. There exists a collection of closed  paths $a_1,b_1,\dots ,a_g,b_g$ that do not intersect $\gamma$  and that represent a symplectic basis of the homology $H_1(X,\mathbb{Z})$. Let also be $a_0$ a small circle around the singularity $P_1$.

When doing the parallelogram construction with parameters $l_1,l_2$  using $\gamma$, the closed paths $a_i,b_i$ persists and also the path $a_0$. Considering a path isometric to $\gamma$ inside the parallelogram, one obtains a closed path $b_0$, that intersect $a_0$ only once, and that does not intersect $a_i,b_i$ for all 
$i\in \{1,\dots ,g\}$. Hence, one gets  symplectic basis on the homology of the newly built surface, that can be used to compute the corresponding parity of spin structure. Here, as we will see later, the only relevant data are the indices of $a_0$ and $b_0$. We clearly have:
\begin{itemize}
\item $\textrm{ind}(a_0)=k_1+1 \mod 2=0 \mod 2$.
\item $\textrm{ind}(b_0)=0$.  
\end{itemize}

\begin{figure}[htb]
\begin{center}
\labellist
\tiny 
\hair 2pt
\pinlabel $l_1^+$ at 325 280   \pinlabel $l_2^-$ at 270 360   \pinlabel $1$ at 235 370  \pinlabel $2$ at 310 390
\pinlabel $3$ at 350 400     \pinlabel $4$ at 410 410    \pinlabel $5$ at 460 370  \pinlabel $5$ at 240 300
\pinlabel $4$ at 285 265   \pinlabel $3$ at 345 270   \pinlabel $2$ at 390 285   \pinlabel $1$ at 465 305

\pinlabel $5$ at 40 60   \pinlabel $4$ at 80 20   \pinlabel $0$ at 125 -10  \pinlabel $3$ at 175 25
\pinlabel $2$ at 220 45     \pinlabel $1$ at 290 60   \pinlabel $1$ at 35 125   \pinlabel $0$ at 90 160  
 \pinlabel $2$ at 135 145   \pinlabel $3$ at 175 155   \pinlabel $4$ at 240 160 \pinlabel $5$ at 280 125

\pinlabel $5$ at 400 60 \pinlabel $4$ at 445 25   \pinlabel $0$ at 490 -10  \pinlabel $3$ at 535 25
\pinlabel $2$ at 580 50     \pinlabel $1$ at 655 60   \pinlabel $1$ at 405 125   \pinlabel $0$ at 560 195  
 \pinlabel $2$ at 475 145   \pinlabel $3$ at 515 155   \pinlabel $4$ at 600 170 \pinlabel $5$ at 650 130

\pinlabel $l_3^-$ at 400 380
\pinlabel $\gamma$ at 300 330
\pinlabel $a_0$ at 400 330
\pinlabel $b_0$ at 115 80
\pinlabel $a_0$ at 225 80
\pinlabel $b_0'$ at 480 80
\pinlabel $a_0$ at 590 80
\endlabellist
\includegraphics[scale=0.5]{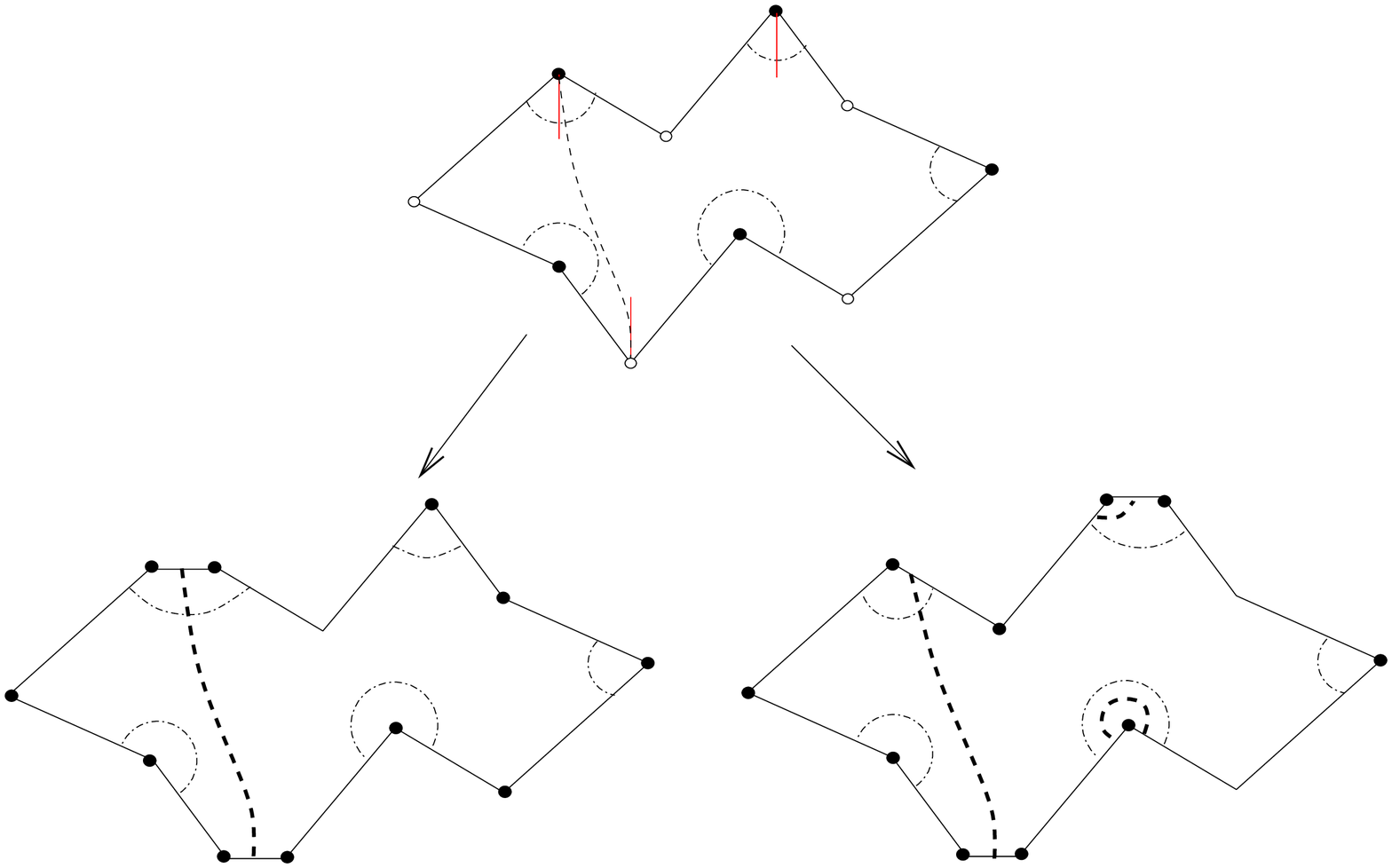}
\caption{Building two surfaces with different spin structure}
\label{fig:base:spin}
\end{center}
\end{figure}

Now we start again from the surface $X$ and replace the separatrix $l_2$  by the separatrix $l_3$, obtained by rotating $l_2$ by the angle $2\pi$. Then, we do the parallelogram construction with parameters  $(l_1,l_3)$. We  consider the following symplectic basis on the resulting flat surface:
\begin{itemize}
\item The path $a_i,b_i$, for $i\in \{1\dots g\}$  which persist under this construction.
\item The path $a_0$, which also persists under this construction.
\item A path $b_0'$  obtained as in Figure~\ref{fig:base:spin}
\end{itemize}

The indices of $a_0,a_1,\dots ,a_g$ and of $b_1,\dots,b_g $ are the same as previously, but $\textrm{ind}(b_0')=1$.

Since, $\textrm{ind}(a_0)+1=1 \mod 2$, the surface obtained from $(l_1,l_3)$ has a different parity of spin structure as the one obtained from $(l_1,l_2)$. Hence,  the two corresponding flat surfaces are in different connected components of the moduli space of Abelian differentials. This proves that  $\mathcal{C}(\mathcal{F}_{comp})$ has at least $2$ connected components.   

\end{proof}

%%%%%%%%%%%%%%%%%%%%%%%%%%%%%%%%%%%%%%%%%%
%%%%%%%%%%%%%%%%%%%%%%%%%%%%%%%%%%%%%%%%%%
\section{Number of connected components of $\mathcal{C}(\mathcal{F}_{comp})$}\label{surg} 
In the previous section, we have used topological invariants to find lower bounds on the number of connected components of $\mathcal{C}(\mathcal{F}_{comp})$. Here, we show that they are the exact values.
\subsection{Three elementary surgeries.}
Here we describe some elementary closed paths in $\mathcal{C}$ that lift to unclosed paths in $\mathcal{C}(\mathcal{F}_{comp})$.

Recall that a saddle connection $\gamma$ joining two distinct singularities is \emph{simple} if there exists no other saddle connection homologous to $\gamma$. In particular, it means that up to a small deformation of the surface in the ambient stratum, there is no other saddle connection in the surface parallel to $\gamma$. Then, deforming suitably the surface with the Teichmüller geodesic flow (see \cite{B2,Boissy:rc} for instance), one gets a surface for which the saddle connection corresponding to $\gamma$ is very short compared to the other ones. Then, one can show that such surface is   obtained by the \emph{breaking up a zero} surgery (see \cite{EMZ}). We give a short description of this surgery. 

\subsubsection*{Breaking up a singularity.}

\begin{figure}[htb]
\begin{center}
\labellist
\tiny 
\hair 2pt
\pinlabel $\varepsilon$ at 35 55
\pinlabel $\varepsilon$ at 35 80
\pinlabel $\varepsilon$ at 100 55
\pinlabel $\varepsilon$ at 100 80
\pinlabel $\varepsilon$ at 60 20
\pinlabel $\varepsilon$ at 60 100

\pinlabel $\varepsilon-\delta$ at 320 55
\pinlabel $\varepsilon-\delta$ at 320 80
\pinlabel $\varepsilon-\delta$ at 385 55
\pinlabel $\varepsilon-\delta$ at 385 85
\pinlabel $\varepsilon+\delta$ at 370 20
\pinlabel $\varepsilon+\delta$ at 370 115
\pinlabel $6\pi$  at 120 0
\pinlabel $4\pi+4\pi$  at 400 0
\pinlabel $\partial V_\varepsilon$ at 280 145 
\endlabellist
\includegraphics[scale=0.6]{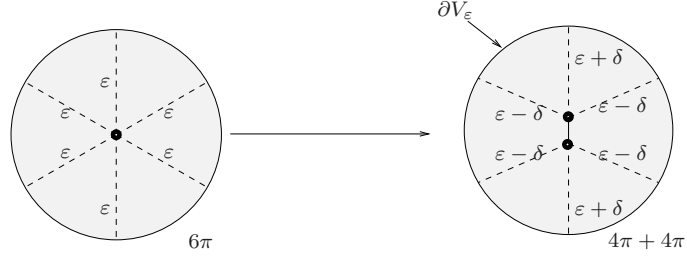}
\caption{Local surgery that break a zero of degree $k_1+k_2$ into two zeroes of degree $k_1$ and $k_2$ respectively.}
\label{break:zero}
\end{center}
\end{figure}

Let $k_1,k_2$ be the degree of the zeroes that are the endpoints of $\gamma$. We start from a zero $P$  of degree $k_1+k_2$. The neighborhood $V_\varepsilon=\{x\in X,  d(x,P)\leq \varepsilon\}$ of this conical singularity is obtained by considering $2(k_1+k_2)+2$ Euclidean half disks of radii $r$  and gluing each half side of them to another one in a cyclic order. We can break the zero into two smaller one by changing continuously the way they are glued to each other as in Figure~\ref{break:zero}. Note that in this surgery, the metric is not modified outside $V_\varepsilon$ . In particular, the boundary $\partial V_\varepsilon$   is isometric to (a connected  covering) of an Euclidean circle. Note that is this construction, we can ``rotate'' the two singularities by an angle $\theta$ by cutting the surface along $\partial V_\varepsilon$, rotating $V_\varepsilon$ by an angle $\theta$ and regluing it.
\medskip

\begin{move} \label{move1}
Let $X$ be a framed surface and let $P_1,P_2$ be two distinct singularities of degree $k$, joined by a simple saddle connection $\gamma$. We deform slightly the surface so that no saddle connection is parallel to $\gamma$. Then, using the Teichmüller geodesic flow, we contract the saddle connection $\gamma$  until it is very small compared to any other saddle connection. So the new surface $X'$ is obtained by breaking a zero of degree $2k$ into two zeroes $P_1'$ and $P_2'$ of degree $k$. Now we continuously rotate these two zeroes by the angle $\theta=(2k+1)\pi$. The resulting unframed surface is the same as $X'$,  but this procedure interchanges $P_1'$ and $P_2'$. Then, we come back to the initial surface $X$, but the labeled zeroes $P_1$ and $P_2$ have been interchanged. The labels on the separatrices adjacent to the other singularities have not changed.
\end{move}

The projection of this move in $\mathcal{C}$ is a closed path. This move in $\mathcal{C}(\mathcal{F}_{comp})$  interchanges $P_1$ and $P_2$, and fixes the separatrices adjacent to the other singularities.

The idea of this previous move is, as we will see, to authorize us to do any degree preserving permutation on the set of simply marked singularities. This explains the terms $n_i!$ in the formula of Theorem~\ref{MT}. Now the next two moves will fix the labeled  singularities and change marked the outgoing separatrices.

\begin{move}\label{move2}
Let $X$ be a framed surface and let $P_1,P_2$ be two distinct singularities of degree $k_1$ and $k_2$ respectively, joined by a simple saddle connection $\gamma$. Here we do not assume that $k_1=k_2$. We perform the same as in Move~\ref{move1}, but we turn $P_1'$ and $P_2'$ by $(k_1+k_2+1)2\pi$ instead.   
\end{move}

This move clearly corresponds to a closed path in $\mathcal{C}$. It also preserves $P_1,P_2$ pointwise. Let us look how changes the marked separatrices. For this, we can fix once for all a marked separatrix for all the singularities. Then, for a singularity of degree $k$, we can identify the set of corresponding horizontal separatrices to $\mathbb{Z}/(k+1)\mathbb{Z}$ by ordering them counterclockwise. We have the following lemma.

\begin{lemma}
Let $l_1\in \mathbb{Z}/(k_1+1)\mathbb{Z}$ and let $l_2\in \mathbb{Z}/(k_2+1)\mathbb{Z}$ be the separatrices associated to $P_1$ and $P_2$. Then, Move~\ref{move2} acts on the set of separatrices in the following way:
\begin{itemize}
\item  $l_1$ becomes $l_1-k_2 \mod k_1+1$ 
\item $l_2$ becomes  $l_2-k_1 \mod k_2+1$ 
\item  All the other labeled separatrices remain unchanged
\end{itemize}
\end{lemma}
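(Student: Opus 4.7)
The plan is a local computation in the breaking-up surgery: I track how the ambient horizontal direction, viewed in the intrinsic angular coordinate of the local cone at $P_1'$, drifts during the move, and translate this drift into a cyclic shift of the labels. The $k_1+1$ horizontal separatrices at $P_1'$ sit at angular positions $\beta_h, \beta_h+2\pi, \ldots, \beta_h+2\pi k_1$ in the local cone of angle $2\pi(k_1+1)$, so a continuous shift of $\beta_h$ by $-2\pi m$ registers as a cyclic relabeling $l_1 \mapsto l_1 - m \pmod{k_1+1}$; the whole problem is thus to compute the value of $m$.

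First I would set up the surgery as follows: decompose $X$ as a fixed exterior piece together with an $\varepsilon$-neighborhood $V_\varepsilon$ of the original singularity. After the break-up, $V_\varepsilon$ is replaced by a \emph{patch} containing $P_1', P_2'$ and $\gamma$, glued to the exterior along its boundary circle. The prescription ``cut along $\partial V_\varepsilon$, rotate $V_\varepsilon$ by $\theta$, and reglue'' from the definition of the breaking-up then translates into: the patch is kept fixed intrinsically, while its gluing to the exterior is rotated by $\theta$. Equivalently, in the intrinsic frame of the patch, the ambient horizontal direction --- which is supplied by the fixed exterior --- rotates by $-\theta$. Move~\ref{move2} corresponds to taking $\theta=2\pi(k_1+k_2+1)$, one full rotation in the cone of angle $2\pi(k_1+k_2+1)$ associated with the original singularity.

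Because the local cone at $P_1'$ sits rigidly inside the patch, the drift of the ambient horizontal is transferred directly to $\beta_h$: over the course of Move~\ref{move2}, $\beta_h$ decreases continuously by $2\pi(k_1+k_2+1)$. Reducing modulo the angle $2\pi(k_1+1)$ of the local cone at $P_1'$, and using the identity $k_1+k_2+1 = (k_1+1)+k_2$, this drift becomes $-2\pi k_2 \pmod{2\pi(k_1+1)}$, yielding the shift $l_1 \mapsto l_1 - k_2 \pmod{k_1+1}$. The symmetric computation at $P_2'$, with local cone of angle $2\pi(k_2+1)$, gives $l_2 \mapsto l_2 - k_1 \pmod{k_2+1}$. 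Since the surgery does not touch anything outside $V_\varepsilon$ (and the gluing rotation of the patch is by its full boundary circumference, which is the identity as a gluing), the separatrices at the other singularities are unaffected.

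The main obstacle is justifying the geometric interpretation of ``continuously rotate $P_1'$ and $P_2'$ by $\theta$'' as ``rotate the patch's gluing by $\theta$''. A naive reading, in which $P_1'$ simply travels along a circular arc around the apex while the patch is otherwise unchanged, would give trivial monodromy, because parallel transport around the apex of a cone of angle $2\pi(k_1+k_2+1)$ is the identity. I would fix the correct reading by a consistency check with Move~\ref{move1}: in the equal-degree case $k_1=k_2=k$ the prescribed angle $(2k+1)\pi$ is exactly half of the full apex angle $2\pi(2k+1)$, and only the ``rotating-gluing'' interpretation produces the half-turn of the patch that interchanges $P_1'$ and $P_2'$ while leaving all labels at the other singularities unchanged, as stated there. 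Once this interpretation is pinned down, the lemma follows from the above modular arithmetic.
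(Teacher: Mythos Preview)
Your argument is correct and follows essentially the same route as the paper: rotating the patch $V_\varepsilon$ by $\theta$ forces the marked separatrices to shift by $-\theta$ in the local angular coordinate (so as to remain horizontal), hence $l_i\mapsto l_i-(k_1+k_2+1)\bmod k_i+1$, which reduces to the stated formula. Your consistency check with Move~\ref{move1} is unnecessary here, since the paper's description of the surgery already specifies the rotation as ``cut along $\partial V_\varepsilon$, rotate $V_\varepsilon$, and reglue'', which is exactly your rotating-gluing interpretation; but it does no harm.
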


\begin{proof}
Note that it is enough to prove this lemma in the case when $P_1,P_2$ is obtained after breaking up a singularity. The last statement of the lemma is obvious by construction: we do not change the metric outside a small neighborhood of $\gamma$.

Now we look at the surgery, keeping track of the labeled separatrices. When turning continuously the set $V_\varepsilon$ by an angle $\theta$, one must simultaneously change the separatrices by an angle $-\theta$, so that they stay horizontal. So at the end, they have moved by the angle $-(k_1+k_2+1)2\pi$ each, so for $i=1,2$,  $l_i$ is replaced by $l_i-(k_1+k_2+1)$ modulo $k_i+1$, which gives the result.
\end{proof}

Note that this move is especially useful when $k_1=k_2$. Then, the corresponding transformation is $(l_1,l_2)\mapsto (l_1+1,l_2+1)$.
\bigskip

Before describing the last move, we first describe a surgery which is analogous to the one presented in \cite{KoZo}. 

\subsubsection*{Bubbling $r$  handles:} \label{sec:bubble:handles}
We start from a singularity of degree $p\in \{0,1\}$. Let us consider a small polygonal line $L$ with no self intersection starting from the singularity. Let $r$ be the number of segments $s_1,\dots ,s_r$ of  $L$.  We consider $r$ parallelograms, each one having a pair on sides parallel to one of the $s_i$. Then, we cut the surface along each $s_i$ and paste in the corresponding parallelogram, and we glue by translation each remaining opposite sides of each parallelogram. We obtain a translation surface of genus $ g(X)+r$, and the degree p singularity have been replaced by a degree $p+2r$ singularity. Note that this surgery can be performed without changing the metric outside a small neighborhood of the singularity of degree $p$. Note that we can ``rotate'' the construction in the following way: the surgery is performed inside a $\varepsilon$ neighborhood $V_\varepsilon$ of the initial singularity of degree $p$. The boundary $\partial V_\varepsilon$ remains a metric covering of a euclidean circle after bubbling the handles. Now we can cut the surface $X$ along this circle and reglue it after a rotation by $\theta$.

Now we can describe the last move.
\begin{move}\label{move3}
Assume that the translation surface $X$ was obtained after bubbling $r$ handles and let $P$ be the corresponding singularity. We continuously rotate the construction as explained previously, by a angle of $(p+2r+1)2\pi$. 
\end{move}
  
As before, the underlying surface in $\mathcal{C}$  is the same after Move~\ref{move3} and any separatrix that does not correspond to the singularity $P$ remains unchanged. 

\begin{lemma}
Let $l\in \mathbb{Z}/(p+2r+1)\mathbb{Z}$ be the separatrix corresponding to $P$. Then, Move~\ref{move3} changes $l$ in the following way:
\begin{itemize}
\item If $p=0$, $l$ is replaced by $l-1$.
\item If $p=1$, $l$ is replaced by $l-2$.    
\end{itemize}
\end{lemma}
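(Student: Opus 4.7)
The plan is to mimic the proof of the lemma for Move~\ref{move2}: I would realize Move~\ref{move3} as a continuous one-parameter family $\phi \in [0, (p+2r+1)\cdot 2\pi] \mapsto X_\phi$ of surgeries obtained by continuously rotating $V_\varepsilon$, and then track how the marked separatrix at $P$ evolves as $\phi$ runs from $0$ to $(p+2r+1)\cdot 2\pi$. That separatrices away from $P$ are unaffected follows immediately, just as in the Move~\ref{move2} argument, because the surgery only alters the metric inside $V_\varepsilon$. By a small deformation of $X$ inside $\mathcal{C}$, one may assume that $X$ arises from an explicit Veech construction with data $(\pi, \lambda, \tau)$, so that the bubbling at $P$ and the continuous rotation can be written down concretely. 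As in the Move~\ref{move2} proof, the basic principle is that rotating $V_\varepsilon$ by an infinitesimal $d\phi$ forces one to shift the marked horizontal separatrix by $-d\phi$ in the intrinsic angle at $P$ in order that it remain horizontal.

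The essential new phenomenon, absent in Move~\ref{move2}, is a mismatch between the angular period of the boundary and the intrinsic cone angle at $P$. The boundary $\partial V_\varepsilon$ was inherited from the neighborhood of the degree-$p$ singularity \emph{before} bubbling, so it is a $(p+1)$-fold cover of a Euclidean circle with rotational period $(p+1)\cdot 2\pi$; but the intrinsic cone angle at $P$ after bubbling is $(p+2r+1)\cdot 2\pi$. The choice $\theta = (p+2r+1)\cdot 2\pi$ is exactly a common multiple of these two periods (a multiple of $(p+1)\cdot 2\pi$ because $(p+2r+1)/(p+1)$ is an integer for $p\in\{0,1\}$), which ensures simultaneously that the boundary gluing closes up at $\phi = \theta$ (so the underlying unframed surface returns to $X$) and that the total rotation corresponds to one full cone angle at $P$.

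The main obstacle is to convert the total rotation $\theta$ into the precise intrinsic angular shift experienced by the marked separatrix at $P$. The naive ``$-\theta$'' bookkeeping would give $-(p+2r+1)\cdot 2\pi \equiv 0$ modulo the cone angle, i.e.\ no shift at all, which is not the right answer. The missing ingredient is a correction due to the winding of the horizontal separatrices at $P$ through the $r$ bubbled handles as $V_\varepsilon$ is continuously rotated, and I expect the cleanest way to account for it is to carry out the computation directly in Veech coordinates, writing down how the labeled permutation attached to the bubbled construction is modified by the continuous rotation and reading off the new horizontal marked separatrix at $P$. The total intrinsic shift at $P$ should come out to $-(p+1)\cdot 2\pi$ modulo the cone angle, yielding the index shift $-(p+1) \pmod{p+2r+1}$; this specializes to $-1$ for $p=0$ and $-2$ for $p=1$, as stated. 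I would first verify the formula by hand in the smallest non-trivial instance for each value of $p$ (say $r=1$) and then extend to arbitrary $r$ either by an induction on the number of bubbled handles or by observing that the handle-winding correction is precisely what is needed so that the net shift is independent of $r$.
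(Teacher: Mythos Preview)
You have correctly spotted a real inconsistency: the definition of Move~3 says to rotate by $(p+2r+1)\cdot 2\pi$, yet the proof asserts the shift $-(p+1)$, and these do not match under the ``shift $=-\theta/2\pi$'' bookkeeping from Move~2. But your proposed resolution --- a handle-winding correction to be extracted in Veech coordinates --- is the wrong diagnosis. There is no such correction: if one really traverses the loop out to $\theta=(p+2r+1)\cdot 2\pi$, then, since $\partial V_\varepsilon$ has rotational period $(p+1)\cdot 2\pi$ and $(p+2r+1)/(p+1)$ is an integer for $p\in\{0,1\}$, one has simply gone around the \emph{same} closed loop in $\mathcal{C}$ that many times, and the net monodromy on the separatrix at $P$ is $-(p+2r+1)\equiv 0\pmod{p+2r+1}$, exactly as your naive computation predicts. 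A careful Veech-coordinate calculation would confirm this, not overturn it, so your proposed induction on $r$ would stall at the base case.

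The point the paper's proof is actually using is the one you yourself isolate in your second paragraph: $\partial V_\varepsilon$ is inherited from the degree-$p$ singularity \emph{before} bubbling and therefore has period $(p+1)\cdot 2\pi$. Hence the loop in $\mathcal{C}$ already closes at $\theta=(p+1)\cdot 2\pi$, and it is \emph{this} loop whose monodromy one wants. The Move~2 argument then applies verbatim: rotating $V_\varepsilon$ by $\theta$ forces the marked horizontal separatrix at $P$ to rotate by $-\theta$, so after $\theta=(p+1)\cdot 2\pi$ the index has shifted by $-(p+1)\pmod{p+2r+1}$, yielding $-1$ for $p=0$ and $-2$ for $p=1$. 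The stated angle $(p+2r+1)\cdot 2\pi$ in Move~3 appears to be a slip --- in Move~2 the boundary period and the ambient cone angle coincide, and the analogy was carried over --- while the proof and every later use of the lemma are consistent with the angle $(p+1)\cdot 2\pi$. So the fix is to take one boundary period as the rotation angle and then run the one-line Move~2 argument; no handle correction, no Veech-coordinate computation, and no induction on $r$ is needed.
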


\begin{proof}
It is easy to see that, as in the case of Move~\ref{move2}, a marked separatrix attached to the singularity is changed by the transformation $l\mapsto l-(p+1)\mod p+2r+1$, and the separatrices associated to the other singularities remain unchanged.
\end{proof}

In particular if $p=0$, we reach all possible separatrices adjacent $P$   in this way.  If $p=1$, then $p+2r+1$ is even, and we reach only half of the separatrices adjacent to $P$ in this way. 

\subsection{Generating the monodromy group.}

\begin{proposition}\label{prop:monod:1}
Assume that $\mathcal{C}$ contains nonhyperelliptic surfaces, then the following holds:
\begin{itemize}
\item The set $\mathcal{C}(\mathcal{F}_{comp})$ is connected if all the singularities have even degree
\item The set $\mathcal{C}(\mathcal{F}_{comp})$ has two connected components otherwise.
\end{itemize}
\end{proposition}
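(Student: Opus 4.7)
The plan is to establish the matching upper bounds: exactly $1$ connected component in the all-even case and exactly $2$ in the case with an odd-degree singularity. Since $\mathcal{C}$ is connected, the number of components of $\mathcal{C}(\mathcal{F}_{comp})$ equals the number of orbits of the monodromy action of $\pi_1(\mathcal{C})$ on the fiber of the covering, i.e.\ on the set of complete frames on a fixed surface $X\in\mathcal{C}$. Each of Moves~\ref{move1}, \ref{move2}, \ref{move3} is a loop in $\mathcal{C}$ whose action on the frame was recorded in Section~\ref{surg}; the task is to show these moves generate enough of the monodromy.

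First I would verify that the geometric prerequisites of the moves are available after moving $X$ suitably inside $\mathcal{C}$. For Moves~\ref{move1} and \ref{move2} this means producing, for any prescribed pair of same-degree singularities, a representative in $\mathcal{C}$ on which they are joined by a simple saddle connection; this is a generic condition and can be arranged by a small deformation combined with the Teichmüller geodesic flow. For Move~\ref{move3} I need every even-degree singularity (resp.\ every odd-degree singularity) to admit, after a path in $\mathcal{C}$, a presentation as obtained by bubbling $r$ handles from a marked point (resp.\ from a degree~$1$ singularity) on a simpler surface. This is the main technical step; it is handled by contracting well-chosen saddle connections in $\mathcal{C}$ to reach an elongated, manifestly bubbled configuration. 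The non-hyperelliptic hypothesis enters precisely here: the extra involutive symmetry of hyperelliptic components obstructs such free bubbling (and indeed produces the $g$ components handled separately by Proposition~\ref{low:bound:hyp}).

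Next I assemble the moves. Move~\ref{move1} yields every transposition of two same-degree singularities and so generates $\prod_i S_{n_i}$ acting on the labels. Move~\ref{move3} with $p=0$ gives, independently at each even-degree singularity, a generator of the cyclic group $\mathbb{Z}/(k_i+1)\mathbb{Z}$ of separatrix rotations. Together these produce the full wreath-product action on the even part of the frame. In the all-even case this is already a transitive monodromy, and so $\mathcal{C}(\mathcal{F}_{comp})$ is connected, proving the first claim.

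When odd-degree singularities are present, Move~\ref{move3} with $p=1$ only shifts the marked separatrix at an odd singularity by $2$, while Move~\ref{move2} applied to any pair shifts each odd-degree coordinate by an even amount modulo the (even) cardinality $k_i+1$; Move~\ref{move1} permutes these coordinates. A direct case check shows all three moves preserve the parity $\varepsilon(F) = \sum_{i:\, k_i \text{ odd}} l_i \pmod 2$, and conversely any two frames sharing the same value of $\varepsilon$ can be linked by a composition of moves (combining the full action on even singularities with the $2$-shifts and diagonal shifts on odd ones). The invariant $\varepsilon$ agrees with the spin-structure invariant used in Proposition~\ref{prop:odd:sing}, which already supplies the lower bound of $2$; the matching bounds give exactly two components. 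The hard part of the plan is the uniform availability of the bubbling presentation of Move~\ref{move3}, which may force treating a few low-complexity strata as explicit base cases where the reduction terminates in a principal stratum already known to be connected.
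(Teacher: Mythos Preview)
Your overall strategy matches the paper's: compute the monodromy on frames via Moves~\ref{move1}, \ref{move2}, \ref{move3}. Two points, however, are genuine gaps rather than details to be filled in.

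\textbf{Odd case, several distinct odd degrees.} You set up Move~\ref{move2} only for \emph{same-degree} pairs, and then assert that the parity $\varepsilon(F)=\sum_{k_i\text{ odd}} l_i\pmod 2$ is the unique obstruction. But with your moves alone this is false when there are at least two distinct odd degrees. Move~\ref{move2} on a same-degree odd pair acts as $(l_i,l_j)\mapsto(l_i+1,l_j+1)$ (an \emph{odd} shift on each coordinate, not an even one as you wrote; only the sum changes by an even amount), and Move~\ref{move3} with $p=1$ shifts one coordinate by $2$. These generate only the subgroup where the parity is preserved \emph{separately within each odd-degree block}, giving $2^s$ orbits when there are $s$ distinct odd degrees, not $2$. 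The paper fixes this by constructing a single surface with a chain $\gamma$ of simple saddle connections running through \emph{all} odd singularities, ordered by degree, and applying Move~\ref{move2} at the junctions between consecutive degree-blocks: there the shift on $l_i$ is $-k_j$ with $k_j$ odd, which couples the parities of adjacent blocks. Without this cross-degree use of Move~\ref{move2}, the upper bound of $2$ does not follow.

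\textbf{All-even case, landing in the right component.} You need each even singularity presented as bubbled from a marked point, but you do not check that a surface built this way actually lies in the given component $\mathcal{C}$. This is not a generic or base-case issue: when all degrees are even the stratum has several components distinguished by the parity of the spin structure, and the straightforward bubbling construction always produces \emph{odd} spin. The paper computes this explicitly and then modifies the bubbling (when some $k_i\ge 4$) to reach even spin as well; it also treats $\mathcal{H}(2^n)$ with even spin separately, since for $n=2$ that component is hyperelliptic and for $n\ge 3$ a different combination of Move~\ref{move2} along a chain and a single Move~\ref{move3} is used. Your proposal does not contain this spin-structure verification, and without it the connectedness claim in the all-even case is not established.
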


We start with the following lemma. 
\begin{lemma}
Let $\mathcal{C}$ be a connected component of $\mathcal{H}(k_1^{n_1},\ldots ,k_r^{n_r})$.  Choose an ordering on the set with multiplicities $\{k_1,\dots ,k_1,\dots ,k_r,\dots  ,k_r\}$. There exists $X\in \mathcal{C}$ and a polygonal line in $X$ that consists of simple saddle connections and that joins all the singularities of $X$  in that order.
\end{lemma}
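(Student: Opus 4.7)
The plan is to proceed by induction on the total number $N = \sum_{i=1}^{r} n_i$ of singularities. The base case $N=1$ is trivial, since the polygonal line reduces to the single singularity itself and any surface $X \in \mathcal{C}$ works.

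For the inductive step, relabel the chosen ordering as $l_1,\dots,l_N$, and let $\mathcal{H}'$ be the stratum obtained by merging the last two entries $l_{N-1}, l_N$ into a single singularity of degree $l_{N-1}+l_N$, placed last in the induced ordering. The first step is to exhibit a connected component $\mathcal{C}'$ of $\mathcal{H}'$ from which $\mathcal{C}$ is reached by the ``breaking up a zero'' surgery described at the beginning of Section~\ref{surg}. Such a $\mathcal{C}'$ exists: starting from any $X \in \mathcal{C}$ and perturbing slightly so that the period coordinates are generic, one finds a simple saddle connection between two singularities of degrees $l_{N-1}$ and $l_N$; contracting it (the inverse of breaking up a zero) produces a surface in $\mathcal{H}'$ belonging to some well-defined component $\mathcal{C}'$.

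By the inductive hypothesis applied to $\mathcal{C}'$ with the induced ordering, there exist $X' \in \mathcal{C}'$ and a polygonal line of simple saddle connections $P_1 \to P_2 \to \cdots \to P_{N-2} \to P'$, where $P'$ is the singularity of degree $l_{N-1}+l_N$. I would then perform the breaking-up surgery at $P'$ with parameter $\delta$ much smaller than the length of any pre-existing saddle connection, producing singularities $P_{N-1}$ and $P_N$ of the prescribed degrees joined by a new short saddle connection $\gamma_{N-1}$. By choosing the rotational angle of the surgery (which controls how the $2(l_{N-1}+l_N)+2$ Euclidean half-disks around $P'$ are regrouped into the two new singularities), one arranges that the half-disk carrying the incoming saddle connection from $P_{N-2}$ ends up at $P_{N-1}$ rather than at $P_N$, so that this saddle connection now terminates at $P_{N-1}$. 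Concatenating with $\gamma_{N-1}$ yields the desired polygonal line. All old saddle connections remain simple because the surgery is supported in a $\delta$-neighborhood of $P'$, and the new $\gamma_{N-1}$ is simple since no other saddle connection lives at its length scale.

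The main obstacle is the component-tracking step: one must verify that for every connected component $\mathcal{C}$ of $\mathcal{H}$ the above procedure actually produces a surface in $\mathcal{C}$ (equivalently, that the correct component $\mathcal{C}'$ is always available at each stage of the induction). For components distinguished only by the parity of the spin structure this reduces to tracking how the parity transforms under the breaking-up surgery, via the Kontsevich--Zorich formula. The hyperelliptic components of $\mathcal{H}^{hyp}(g-1,g-1)$ (possibly with marked points) are the delicate case: to stay inside the hyperelliptic locus one either performs the induction equivariantly with respect to the hyperelliptic involution (feasible when $l_{N-1}=l_N$ at the corresponding step) or begins the induction from an explicit hyperelliptic model in a lower-dimensional stratum and uses the involution's symmetry to ensure that the desired ordering is realized.
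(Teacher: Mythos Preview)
The paper does not actually prove this lemma here; it simply cites Proposition~3.5 of \cite{Boissy:rc}. So there is no in-paper argument to compare against, only the external reference. Your inductive scheme on the number $N$ of singularities is a natural way to organize such a proof, and the surgery half of the inductive step (breaking up the terminal zero $P'$, choosing the angular parameter so that the incoming segment lands at $P_{N-1}$, and appending the short new saddle connection $\gamma_{N-1}$) is carried out correctly.

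The genuine gap is the sentence in which you produce $\mathcal{C}'$. You write that ``starting from any $X\in\mathcal{C}$ and perturbing slightly so that the period coordinates are generic, one finds a simple saddle connection between two singularities of degrees $l_{N-1}$ and $l_N$''. This is precisely the nontrivial geometric input, and you have not justified it. The existence of \emph{some} saddle connection between two prescribed singularities is easy; the existence of a \emph{simple} one (no other saddle connection in the same relative homology class) is not, and genericity alone does not give it, since rigid configurations of homologous saddle connections persist on open sets of the stratum. What you need is exactly the adjacency statement that every component $\mathcal{C}$ lies in the closure, via the breaking-up map, of some component $\mathcal{C}'$ of the stratum with $l_{N-1}$ and $l_N$ merged. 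That is established in \cite{EMZ} and \cite{KoZo} (and is what the argument in \cite{Boissy:rc} relies on), but it is not a one-line consequence of perturbing to generic periods. Without it your induction cannot produce the surface in $\mathcal{C}'$ on which to run the hypothesis.

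Your final paragraph correctly identifies component tracking as a second obstacle, but the treatment there is only a sketch: the spin-parity bookkeeping under breaking up and the hyperelliptic case each require a genuine argument (for the latter, one must in particular ensure that the intermediate components $\mathcal{C}'$ are themselves hyperelliptic at every stage), and you have not supplied one.
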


\begin{proof}
The proof is the same as  the proof of Proposition~3.5 in \cite{Boissy:rc}. 
\end{proof}

\begin{proof}[Proof of Proposition~\ref{prop:monod:1}]
We first assume that there exists  odd degree singularities in the underlying stratum. Since we assume that it is not a hyperelliptic stratum, it is connected (see \cite{KoZo}). We write this stratum as $\mathcal{H}(k_1^{n_1},\dots ,k_{s}^{n_s}, (2k'_1)^{\beta_1},\dots ,(2k'_{s'})^{\beta_{s'}})$ , with $s+s'=r$, and $k_1,\dots ,k_{s}$ are odd.

Now we start from a surface in $\mathcal{H}(k_1^{n_1-1},1,\dots ,k_{s}^{n_s-1},1)$ (i.e. we don't take the even degree singularities and we replace one singularity of each odd degree by a singularity of degree one). From the previous lemma, we can assume that there is a polygonal path of simple saddle connections which has no self intersection and that joins successively all the singularities in the following order: 
\begin{itemize}
\item first the singularities  of degree $k_1$,
\item then, a singularity of degree 1,
\item then, the singularities of degree $k_2$,
\item then, a singularity of degree 1,
\item and so on \dots  
\end{itemize}

Now for each singularity of degree $1$ that ends a group of singularities of degree $k_i$, we bubble $(k_i-1)/2$ handles as in section~\ref{sec:bubble:handles}. This replace the singularity of degree $1$ by a singularity of degree $k_i$. Note that the polygonal path  of simple saddle connections persists under this surgery. We will denote by $\gamma$ this polygonal line. 

Now for each $i\in \{1,\dots ,s'\}$, we consider a polygonal line $\gamma_i$ joining $\beta_i $ regular points, such that the paths $\gamma,(\gamma_i)_i$ have no intersection points. Then, for each vertex, we bubble $n_i$ handles. We obtain $s'$ chains of simple saddle connections that join each collection of singularities of degree $2k'_i$. The resulting surface is therefore in $\mathcal{H}(k_1^{n_1},\dots ,k_{s}^{n_s}, (2k'_1)^{\beta_1},\dots ,(2k'_{s'})^{\beta_{s'}})$, which is the stra\-tum that we stu\-dy.

Now using Move~1, we see that for any polygonal line of simple saddle connections joining singularities with the same degree, we can perform any transposition of two consecutive singularities.  Hence we can arbitrarily permute the singularities sharing the same degree.

Using Move~3, we see that we can reach any choice of separatrices for the even degree singularities.

Now we consider the chain $\gamma$ of simple saddle connections joining all the separatrices of odd degree, that was constructed before. The first  $n_1$ vertices of $\gamma$ makes a chain of singularities of degree $k_1$ . Let us name the singularities $P_{1,1},\dots ,P_{1,n_1}$ according to the order given by the polygonal path $\gamma$. If $n_1>1$, we reach any choice of separatrices for $P_{1,1},\dots ,P_{1,n_1-1}$ by applying successively Move~2 on the pairs $(P_{1,i},P_{1,i+1})$ for $i\in \{1,\dots ,n_1-1\}$, in order to choose arbitrarily a labeled separatrix of $P_{1,i}$. Note that once this is done for some $i$, the next moves don't change the marked separatrix corresponding to $P_{1,i}$. Then, for the singularity $P_{1,n_1}$, we use Move~3 to rotate the corresponding separatrix by any even number (recall that the set of outgoing separatrices corresponding to a singularity of degree $k$ is naturally identified with $\mathbb{Z}/(k+1)\mathbb{Z})$). If $P_{1,n_1}$ is not the end of $\gamma$, \emph{i.e.} the polygonal line $\gamma$ continues to some other (odd) degree singularity $P_{2,1}$, then Move~2 on the pair $P_{1,n_1},P_{2,1}$ will act on the marked separatrice of $P_{1,n_1}$  as $l_1\mapsto l-k_2$. Hence,  it will be changed by a \emph{odd} number, so in combination with Move~3, we obtain all possible choices.

If we iterate this procedure until the last singularity of degree $k_s$, we see that we can reach any choice of separatrices for the singularities of odd degree, except the last one of the chain where we obtain only half of the possibilities. This proves the proposition in the case when there exists odd singularities.

\medskip
If there does not exists any singularity of odd degree, the procedure described above works (with $\gamma=\emptyset$) as soon as we can find a surface like above in the connected component that we study. But in this case, the corresponding stratum of translation surfaces is not connected. 

Consider a translation surface obtained from a torus with the "bubbling $r$ handles" construction. We can easily show that in this case, each singularity contribute to zero to the spin structure. See Figure~\ref{spin:bubble1}. Hence the resulting parity of spin structure is the same as for the flat torus, which is odd.
\begin{figure}[htb]
\begin{center}
\labellist
\tiny 
\hair 2pt
\pinlabel 1 at 150 220   \pinlabel 2 at 170 220
\pinlabel 1 at 420 290   \pinlabel 2 at 540 290 
\pinlabel 3 at 140 160   \pinlabel 4 at 160 160
\pinlabel 3 at 410 200   \pinlabel 4 at 515 200
\pinlabel 5 at 130 100   \pinlabel 6 at 150 100
\pinlabel 5 at 410 120 \pinlabel 6 at 515 120
\pinlabel $a_1$ at 220 240 \pinlabel $b_1$ at 500 305
\pinlabel $a_2$ at 250 240 \pinlabel $b_2$ at 475 220
\pinlabel $a_3$ at 300 240 \pinlabel $b_3$ at 460 135
\pinlabel $a_1$ at 450 290 
\pinlabel $a_2$ at 440 200 \pinlabel $a_3$ at 440 115
\endlabellist
\includegraphics[scale=0.5]{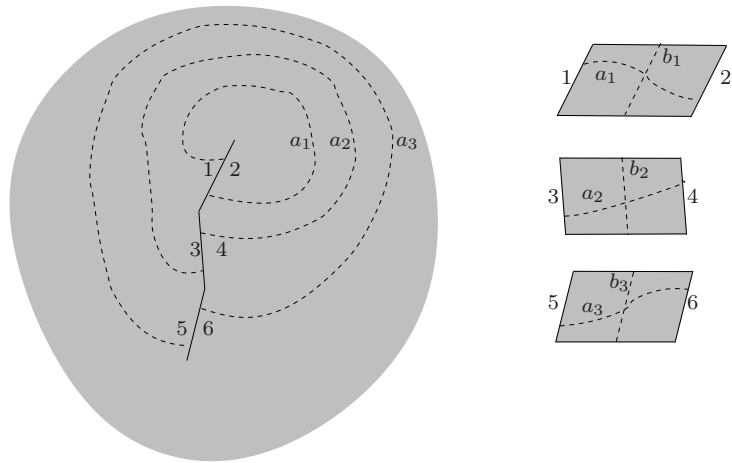}
\caption{The ``bubbling $r$-handles standard construction. We have $\textrm{ind}(a_i)=1$, so the collection $(a_i,b_i)_{i\in \{1\dots \beta\}}$ contributes to $0$ for the spin structure.}
\label{spin:bubble1}
\end{center}
\end{figure}

If there exists a singularity of degree $k\geq 4$. It is easy to see that one can slightly change the construction to make this singularity contribute to 1 to the spin structure, and obtain a surface with even spin structure (see~Figure~\ref{spin:bubble2})

\begin{figure}[htb]
\begin{center}
\labellist
\tiny 
\hair 2pt
\pinlabel 1 at 148 212   \pinlabel 2 at 175 170
\pinlabel 1 at 420 290   \pinlabel 2 at 540 290 
\pinlabel 3 at 135 165   \pinlabel 4 at 180 210
\pinlabel 3 at 410 200   \pinlabel 4 at 515 200
\pinlabel 5 at 130 100   \pinlabel 6 at 150 100
\pinlabel 5 at 410 120 \pinlabel 6 at 515 120
\pinlabel $a_1$ at 235 250 \pinlabel $b_1$ at 500 305
\pinlabel $a_2$ at 190 260 \pinlabel $b_2$ at 475 220
\pinlabel $a_3$ at 300 240 \pinlabel $b_3$ at 450 135
\pinlabel $a_1$ at 450 285 \pinlabel $a_2$ at 450 325
\pinlabel $a_2$ at 440 205  \pinlabel $a_3$ at 440 115
\pinlabel $a_2$ at 105 170

\endlabellist
\includegraphics[scale=0.5]{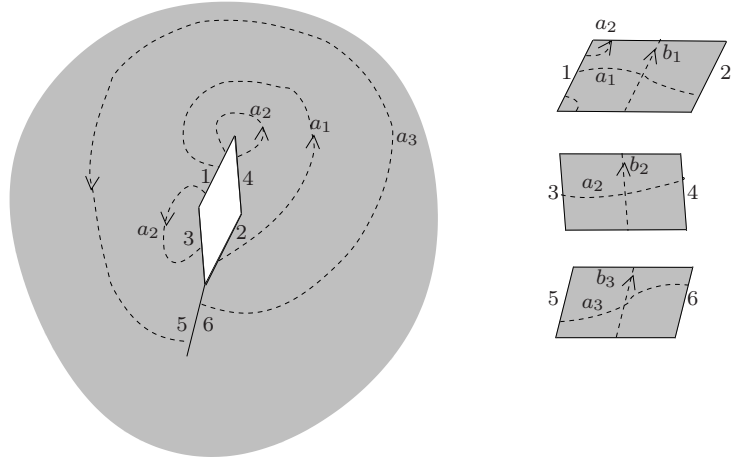}
\caption{A slight change in the $r$-handle construction, for $r\geq 2$ changes the spin structure, since in this case, $\textrm{Ind}(a_2)=0 \mod 2$.}
\label{spin:bubble2}
\end{center}
\end{figure}

The last remaining case to see is when all the singularities are of degree $2$, and the parity of spin structure is even. If there are exactly $2$ singularities, then the connected component is $\mathcal{H}^{even}(2,2)=\mathcal{H}^{hyp}(2,2)$, which is a hyperelliptic case. If there are at least $3$ singularities. We can find a surface with a chain of simple saddle connections joining all the singularities, and such that the last element of the chain is obtained by the "bubbling a handle" construction. Then, combining Move~2 along the chain, and Move~3 at the end of the chain, we obtain that $\mathcal{C}(\mathcal{F}_{comp})$ is connected. This concludes the proof of the proposition.
\end{proof}

\begin{proof}[Proof of Theorem~\ref{th:cc:Cfr}]
The nonhyperelliptic case is given by Proposition~\ref{prop:monod:1}. For the hyperelliptic case, the lower bound on the number of connected components is given by Proposition~\ref{low:bound:hyp}, and the upper bound is easy and left to the reader.
\end{proof}

\begin{proof}[Proof of Theorem~\ref{MT}]
Recall that we denote by $\mathcal{H}(k_1^{n_1},\dots ,k_r^{n_r})$ the ambient stratum of the moduli space of Abelian differentials. 
The degree of the covering $\mathcal{C}(\mathcal{F}_{comp})\to \mathcal{C}$, restricted to a connected component of $\mathcal{C}(\mathcal{F}_{comp})$, is clearly $\frac{\Pi_{i=1}^r n_i! (k_i+1)^{\alpha_i}}{c}$, were $c$ is the number of connected component of $\mathcal{C}(\mathcal{F}_{comp})$ and is given by Theorem~\ref{th:cc:Cfr}. 

Let $k$ be the degree of the marked singularity associated the Rauzy class $R$,  and let $n$ be the number of singularities of degree $k$. The set $\mathcal{C}_{k}$ is connected and the degree of the projection $\mathcal{C}_{k}\to \mathcal{C}$ is $n (k+1)$. Hence we have:

$$
\frac{|R_{lab}|}{|R|}=\frac{\Pi_{i=1}^r n_i! (k_i+1)^{\alpha_i}}{c.(k+1).n}
$$
Which gives Theorem~\ref{MT}.
\end{proof}

\appendix 
\section{Rauzy classes for quadratic differentials}

\emph{Half-translation surfaces} are a natural generalization of translation surfaces. They are surfaces with an atlas such that the changes of coordinates are not only translations, but can also be half-turns. They corresponds to Riemann surfaces with \emph{quadratic differentials}.

Danthony and Noguiera have generalized interval exchange transformations and Rauzy induction to describe first return maps of nonoriented measured foliations on transverse segment. One gets \emph{linear involutions}, see \cite{DaNo}.

The relation between quadratic differentials and linear involutions was described by the author and Lanneau in \cite{BL09}. 

One can wonder if there is a analogous result for Rauzy classes appearing in this context. There doesn't seem to be a natural relation between "labeled generalized permutations" and framed half-translation surfaces. In particular, there is no ``quadratic'' equivalent of Lemma~\ref{Rlab:to:Hsat}.

Numerical experiments on SAGE, suggest that the ratio between a labeled Rauzy class and its corresponding reduced one is always $n!$ or $\frac{n!}{2}$, were $n$ is the number of underlying intervals, which is generally much more than the Abelian case. In particular, using Rauzy induction and labelling the intervals, these numerical experiments suggests that one obtain either any renumbering of the intervals, or any even renumbering of the intervals depending on the stratum.

One can also look at \emph{extended Rauzy classes}, where we add Rauzy moves that correspond to cutting on the left of the interval (see \cite{KoZo}). In this case, numerical experiments suggests that the ratio is also $n!$ or $n!/2$ depending on the stratum.

\end{document}